\numberwithin{equation}{section}
\newtheorem{lemma}{Lemma}[section]
\newtheorem{prop}[lemma]{Proposition}
\newtheorem{theorem}[lemma]{Theorem}
\newtheorem{cor}[lemma]{Corollary}
\newtheorem{rem}[lemma]{Remark}
\newtheorem{example}[lemma]{Example}
\newtheorem{definition}[lemma]{Definition}
\newcommand{\C}{\mathbb{C}}
\newcommand\Tr{\mathop{\rm Tr}\nolimits}
\def\ket#1{| #1 \rangle}
\def\kb#1#2{|#1\rangle\!\langle #2 |}
\begin{document}

\title{Quasiorthogonality of Commutative Algebras, Complex Hadamard Matrices, and Mutually Unbiased Measurements}

\author{ 
Sooyeong Kim\textsuperscript{1,2}\footnote{Contact: sooyeong@uoguelph.ca}, David Kribs\textsuperscript{1}, 
	 Edison Lozano\textsuperscript{2}, Rajesh Pereira\textsuperscript{1}, and Sarah Plosker\textsuperscript{2}
}

\maketitle

\begin{abstract}
We deepen the theory of quasiorthogonal and approximately quasiorthogonal operator algebras through an analysis of the commutative algebra case. 
We give a new approach to calculate the measure of orthogonality between two such subalgebras of matrices, based on a matrix-theoretic notion we introduce that has a connection to complex Hadamard matrices. We also show how this new tool can yield significant information on the general non-commutative case. We finish by considering quasiorthogonality for the important subclass of commutative algebras that arise from mutually unbiased bases (MUBs) and mutually unbiased measurements (MUMs) in quantum information theory. We present a number of examples throughout the work, including a subclass that arises from group algebras and Latin squares.

    \medskip

    \noindent \textbf{Keywords:}    quasiorthogonal algebras, complex Hadamard matrix, commutative algebra, C$^*$-algebra, mutually unbiased measurements, mutually unbiased bases\\
	
	\noindent \textbf{MSC2010 Classification:}  
  47L90; 
  81P45; 
  05B20; 
   81P94 
\end{abstract}

\addtocounter{footnote}{1}
\footnotetext{Department of Mathematics \& Statistics, University of Guelph, Guelph, ON, Canada  N1G 2W1 N1G 2W1}
\addtocounter{footnote}{1}
\footnotetext{Department of Mathematics \& Computer Science, Brandon University, Brandon, MB, Canada R7A 6A9}

\section{Introduction} \label{sec:intro}

The notion of quasiorthogonality for operator algebras was introduced to provide a quantitative measure of the geometric relationships between algebras, through their conditional expectations and trace inner products \cite{petz2007complementarity,weiner2010}. Pivotal to the development and motivation for considering quasiorthogonality were applications in quantum information theory, and in particular for  investigations of mutually unbiased bases (MUBs) \cite{bandyopadhyay2002MUBs, boykin2005MUBs, pittenger2004} and their associated commutative algebras. The scope of quasiorthogonality related analyses has more recently expanded beyond its origins, with applications in quantum privacy, complementarity and quantum error correction to name a few \cite{kribs2021approximate, kribs2018quantum, levick2016private, levick2017quantum}. 

Although investigations of quasiorthogonality have understandably expanded to general algebras that are not necessarily commutative, it appears that further analysis of the case of commutative algebras is warranted. 
Indeed, this is particularly relevant as, for instance, many fundamental structures in quantum information theory are inherently commutative, notably the maximal abelian subalgebras (MASAs) arising from MUBs. 
Furthermore, the importance of the commutative case has been emphasized by recent developments in the theory of mutually unbiased measurements (MUMs) \cite{farkas2023MUMs,tavakoli2021mutually}. These structures generalize MUBs by relaxing two key constraints: the requirement for rank-one projections and the necessity for matching Hilbert space dimensions. Despite these generalizations, MUMs maintain the crucial property that their associated algebras are commutative, providing an expanded framework for considering quasiorthogonality. The commutative nature of all of these algebras suggests the possibility of stronger results for the subclass than those available in the general non-commutative setting. 

In this paper, we first establish a connection between the quasiorthogonality of commutative algebras and complex Hadamard matrices, which are well-studied 
matrices whose entries have constant modulus and which satisfy $HH^* = nI_n$, where $n$ is the matrix dimension \cite{TZ}. Built on this connection, we introduce a new notion into the analysis, that of a `quasiable' matrix, which allows us to derive a new matrix-theoretic technique to compute the quasiorthogonality measure between pairs of commutative algebras. After some more work, we show how this approach can be extended to the general non-commutative case as well. And, as noted above, we finish by considering quasiorthogonality for the important subclass of commutative algebras that arise from MUMs in quantum information theory. Further details on the contents are given below.     

This paper is organized as follows. In Section~\ref{sec:theory}, we present the mathematical preliminaries, reviewing the notions of conditional expectations, MASAs, complex Hadamard matrices, and the concept of quasiorthogonality along with some of its characterizations. 
In Section~\ref{sec:res}, we present our analysis of the commutative algebra setting. In particular,  we establish  a connection between quasiorthogonal commutative algebras and complex Hadamard matrices, we develop the notion of a quasiable doubly stochastic matrix, and we show a direct link between quasiable  matrices and quasiorthogonal unital commutative algebras. We also consider a subclass that arises from group algebras and the study of Latin squares. In Section~\ref{sec:nc}, we begin by illustrating the differences between commutative and non-commutative settings via the notion of separating vectors for algebras. We then provide a characterization of quasiorthogonal (non-commutative) subalgebras built on the notion of quasiable matrices introduced in the previous section, and we characterize quasiorthogonal algebras via quasiorthogonality of their MASAs. In Section~\ref{sec:examp}, we explore quasiothogonality in detail for a distinguished special case of commutative algebras, those that correspond to MUBs and MUMs in quantum information theory. This section can be read independently of the previous two sections if desired. We feature several examples and forward-looking remarks throughout the work.

\section{Background} \label{sec:theory}
We will work with finite-dimensional complex-valued matrices throughout the paper.  The algebra of all such matrices for fixed dimension $n$ will be denoted $M_n$. We denote the trace of a matrix $X$ by $\mathrm{Tr}(X)$. The {\it Frobenius norm} of a matrix $A\in M_n$ is defined as $\|A\|_F:=\sqrt{\sum_{i,j=1}^n|a_{i,j}|^2}=\sqrt{\Tr(AA^*)}$, where $A^*$ denotes the conjugate transpose of the matrix $A$. (This can also be viewed as a 2-norm on matrices.)  A  {\it unital $*$-subalgebra} of   $M_n$ is a subset $\mathcal{A}\subseteq{M_n}$ that is closed under matrix addition, matrix multiplication, scalar multiplication, and taking adjoints, and contains the identity matrix $I_n$. It is well-known that any unital $*$-subalgebra $\mathcal{A}$ of $M_n$ is a finite-dimensional C$^*$-algebra and is unitarily equivalent to an algebra of the form
$
\bigoplus_{k=1}^d \left({M}_{n_k}\otimes I_{m_k}\right),
$
where $\sum_{k=1}^{d} n_km_k=n$ \cite{davidson}.

The concept of a conditional expectation has historically been a central notion in the theory of operator algebras \cite{umegaki}, and has in more recent years found applications in areas such as quantum information  \cite{church,weiner2010}.  A Hermitian matrix $X = X^*$ is positive semidefinite, denoted $X\succeq 0$, provided  all of its eigenvalues are non-negative.
 Let $\mathcal{A}$ be a unital $*$-subalgebra of $M_n$. Then $\mathcal{E}_{\mathcal{A}}:M_n\rightarrow M_n$ is the unique \emph{conditional expectation channel onto $\mathcal{A}$} provided it satisfies the following four conditions:
\begin{enumerate}
	\item $\mathcal{E}_{\mathcal{A}}(A) = A$ for all $A\in \mathcal{A}$;
	\item $\mathcal{E}_{\mathcal{A}}(X) \succeq 0$ whenever $X\succeq 0$;
	\item $\mathcal{E}_{\mathcal{A}}(A_1 X A_2) = A_1\mathcal{E}_{\mathcal{A}}(X)A_2$ for all $A_1,A_2\in \mathcal{A}$ and $X\in M_n$; and 
	\item $\mathrm{Tr}(\mathcal{E}_{\mathcal{A}}(X)) = \mathrm{Tr}(X)$ for all $X\in M_n$.
\end{enumerate}
It is an easy exercise to see that, for any unital $*$-subalgebra $\mathcal A$,  the conditional expectation channel is the orthogonal projection onto $\mathcal{A}$ where the underlying inner product is the trace inner product: $\langle X,Y\rangle=\Tr(XY^*)$.

Two unital $*$-subalgebras $\mathcal{A}$ and $\mathcal{B}$ of $M_n$ cannot be orthogonal since they both contain the identity. However, they can be thought of as {\it quasi}-orthogonal in a natural way: if $\mathcal A\cap \{I\}^\perp$ and $\mathcal B\cap \{I\}^\perp$ are orthogonal in the trace inner product. 
This motivates the following definition \cite[Definition 2]{levick2016private}. 

\begin{definition}\label{q.o} Two unital $*$-subalgebras $\mathcal{A}$ and $\mathcal{B}$ of $M_n$ are \emph{quasiorthogonal} if they satisfy any one of the following equivalent conditions:
	\begin{enumerate}
		\item $\mathrm{Tr}\bigl((A - \frac{\mathrm{Tr}(A)}{n}I_n)(B-\frac{\mathrm{Tr}(B)}{n}I_n)\bigr) = 0$ for all $A\in \mathcal{A}$, $B\in \mathcal{B}$;
		\item $\mathrm{Tr}(AB) = \frac{\mathrm{Tr}(A)\mathrm{Tr}(B)}{n}$ for all $A\in \mathcal{A}$, $B\in \mathcal{B}$;
		\item\label{3} $\mathcal{E}_{\mathcal{A}}(B) = \frac{\mathrm{Tr}(B)}{n}I_n$ for all $B\in \mathcal{B}$ and $\mathcal{E}_{\mathcal{B}}(A) = \frac{\mathrm{Tr}(A)}{n}I_n$ for all $A \in \mathcal{A}$.
	\end{enumerate}
\end{definition}

As a distinguished special case of interest for our investigation, let $\Delta_n$ be the algebra of $n \times n$ complex diagonal matrices. For any $n\in \mathbb N$ with  $n\geq 2$, there are infinitely many abelian subalgebras of $M_n$ determined by $\Delta_n$; they are of the form $U \Delta_n U^* = \{ UDU^* \, : \, D\in\Delta_n\}$ where $U$ is a unitary matrix, with the centre of the algebra being $\mathbb C I_n$. 
Each of the algebras $U \Delta_n U^*$ is a maximal abelian subalgebra (MASA) inside $M_n$, and by the general form for $\ast$-subalgebras given above, it follows that every MASA of $M_n$ is of this form. 

A {\it complex Hadamard matrix} $H= [h_{i,j}]\in M_n$ is defined as a unimodular matrix whose adjoint is $n$ times its inverse; that is,   $\vert h_{i,j}\vert=1$ for all $1\le i,j\le n$ and  $HH^*=nI_n$.  Of course, any (real) Hadamard matrix is a complex Hadamard matrix.  Complex Hadamard matrices are an important concept in what follows. See \cite{TZ} for a comprehensive survey on this important family of matrices.  


  We have the following characterization of quasiorthogonal MASAs of $M_n$. Although it is a fairly straightforward observation, it does not appear to have been explicitly mentioned in the literature as far as we know. 
  
\begin{prop}\label{thm:cH}   Let $U\in M_n$ be a  unitary matrix.  Then the algebras  $\Delta_n$ and $U\Delta_nU^*$ are quasiorthogonal if and only if $\sqrt{n}U$ is a complex Hadamard matrix.
\end{prop}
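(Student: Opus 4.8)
The plan is to use characterization (2) in Definition~\ref{q.o}: the algebras $\Delta_n$ and $U\Delta_n U^*$ are quasiorthogonal if and only if $\Tr(D\,U D' U^*) = \frac{\Tr(D)\Tr(D')}{n}$ for all diagonal $D, D' \in \Delta_n$. Since both sides are bilinear in $(D, D')$, it suffices to test this condition on the standard basis of rank-one diagonal projections $E_{ii} = \kb{i}{i}$, $i = 1, \dots, n$. So the whole statement will reduce to understanding the scalars $\Tr(E_{ii}\, U E_{jj} U^*)$ for all $i, j$.

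First I would compute $\Tr(E_{ii}\, U E_{jj} U^*)$ directly. Writing $U = [u_{k,\ell}]$, we have $U E_{jj} U^* = (\text{$j$-th column of }U)(\text{$j$-th column of }U)^*$, whose $(i,i)$ entry is $|u_{i,j}|^2$; hence $\Tr(E_{ii}\, U E_{jj} U^*) = |u_{i,j}|^2$. On the other hand $\Tr(E_{ii}) = \Tr(E_{jj}) = 1$, so the right-hand side of condition (2) is $\frac{1}{n}$. Therefore quasiorthogonality of $\Delta_n$ and $U\Delta_n U^*$ is equivalent to $|u_{i,j}|^2 = \frac{1}{n}$ for all $1 \le i, j \le n$, i.e.\ to $|(\sqrt{n}\,U)_{i,j}| = 1$ for all $i,j$ — the unimodularity condition. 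The remaining condition defining a complex Hadamard matrix, namely $(\sqrt{n}U)(\sqrt{n}U)^* = n I_n$, is automatic because $U$ is unitary: $(\sqrt n U)(\sqrt n U)^* = n U U^* = n I_n$. Combining the two directions gives the equivalence.

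The only mildly delicate point — and the step I expect to need a sentence of justification — is the reduction from "all diagonal $A, B$" to "the rank-one diagonal projections $E_{ii}$". I would note that $\{E_{11}, \dots, E_{nn}\}$ is a linear basis for $\Delta_n$, and that the map $(A,B) \mapsto \Tr(AUBU^*) - \tfrac{1}{n}\Tr(A)\Tr(B)$ is linear in each argument separately, so vanishing on all pairs of basis elements forces it to vanish identically on $\Delta_n \times \Delta_n$. Everything else is a short direct computation, so I do not anticipate a genuine obstacle; the proof is essentially the observation that the squared moduli of the entries of a unitary conjugating one MASA to another are exactly the transition probabilities, and these being constant (equal to $1/n$) is precisely the complex Hadamard condition modulo the automatic orthogonality of rows.
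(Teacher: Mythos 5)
Your proof is correct and is essentially the same argument as the paper's: both reduce the quasiorthogonality condition to the statement that $|u_{i,j}|^2 = 1/n$ for all $i,j$, with the row-orthogonality half of the Hadamard condition being automatic from unitarity. The only cosmetic difference is that you test condition (2) of Definition~\ref{q.o} on the basis $\{E_{ii}\}$ and invoke bilinearity, while the paper phrases the same computation in terms of the diagonal entries of $UDU^*$ for arbitrary diagonal $D$.
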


\begin{proof}
     Let $U$ be a fixed unitary matrix. Using the fact that the $(i,i)$th entry of $UDU^*$ for any diagonal matrix $D = \mathrm{diag}(d_{ii})\in M_n$ is
$\sum_j |u_{i,j}|^2 d_{j,j}$, we see that $\sqrt{n}U$ is a complex Hadamard matrix if and only if
all the diagonal entries of $UDU^*$ are equal to $n^{-1}\Tr(UDU^*)$ for all diagonal matrices $D\in M_n$. 
\end{proof}

In \cite{weiner2010}, Weiner introduced the following quantitative measure of orthogonality for algebras, which yields quasiorthogonality at its minimum value.

\begin{definition} For unital $*$-algebras $\mathcal{A},\mathcal{B}\subseteq M_n$, the \emph{measure of orthogonality} between them is given by
	\begin{equation}\label{mq1} Q (\mathcal{A},\mathcal{B}):=\mathrm{Tr}(T_{\mathcal{A}}T_{\mathcal{B}})\end{equation} where $T_{\mathcal{A}}$ is the (any) matrix representation of the conditional expectation channel $\mathcal{E}_{\mathcal{A}}$ acting on the vector space $M_n$.
\end{definition}

We note that in general there are a few different ways to compute this measure (see \cite{kribs2021approximate} for a discussion). For instance, if $\{A_i\}_i$ and $\{B_j \}_j$ are orthonormal bases in the trace inner product for, respectively, $\mathcal A$ and $\mathcal B$, then  
\[
    Q (\mathcal{A},\mathcal{B}) = \sum_{i,j} \left| \mathrm{Tr}(A_iB_j)\right|^2 . 
\]
It is easy to see that $Q (\mathcal{A},\mathcal{B}) \geq 1$, as we can always choose orthonormal bases for both algebras that include $\frac{1}{\sqrt{n}} I_n$ (as the algebras are unital). In fact, one can show that the minimum ($Q (\mathcal{A},\mathcal{B}) = 1$) is attained exactly when the algebras are quasiorthogonal (see Corollary~1 of \cite{kribs2021approximate} for a derivation based on the above form). 

We will investigate this quantity in the rest of the paper for special classes of commutative algebras, but note that for the MASAs, it follows from Proposition~\ref{thm:cH} that $Q(\Delta_n, U\Delta_nU^*)$ can  be seen as a measure of how close $\sqrt{n}U$ is to being a complex Hadamard matrix.

\section{Quasiorthogonality of Commutative $*$-Algebras}  \label{sec:res}

Let $\mathcal{A},\mathcal{B}\subseteq M_n$ be unital commutative $*$-algebras. In the commutative setting, the representation theory of unital $*$-subalgebras mentioned in Section~\ref{sec:theory} simplifies to the existence of unitary matrices $U$ and $V$ such that 
\begin{align}\label{eq:commutativecase}
    \mathcal{A} = U\left(\bigoplus_{k=1}^{d_1}\mathbb{C}I_{m_k}\right) U^*\quad \textnormal{and} \quad \mathcal{B} = V\left(\bigoplus_{k=1}^{d_2}\mathbb{C}I_{n_k}\right) V^*, 
\end{align}
where $\sum_{k=1}^{d_1} m_k = n = \sum_{k=1}^{d_2} n_k$. 

We note that $Q(\mathcal A, \mathcal B)$ is unitarily invariant, in that $Q(\mathcal A,\mathcal B)=Q(W{\mathcal A}W^*, W{\mathcal B}W^*)$ for any unitary $W$; indeed, note that $\{W A_i W^*\}_i$ form an orthonormal basis for $\mathcal A$ if $\{A_i\}_i$ do, and then use the formula for $Q(\mathcal A,\mathcal B)$ noted at the end of the last section. So in particular, $Q(\mathcal A,\mathcal B)=Q(V^*{\mathcal A}V, V^*{\mathcal B}V)=Q(\tilde{\mathcal A}, \tilde{\mathcal B})$ with $\tilde{\mathcal A}=W\left(\bigoplus_{k=1}^{d_1}\mathbb{C}I_{m_k}\right)W^*$, $\tilde{\mathcal B}=\bigoplus_{k=1}^{d_2}\mathbb{C}I_{n_k}$, and $W=V^*U$. 
Hence for brevity we can without loss of generality assume one of the algebras under quasiorthogonality consideration is actually a subalgebra of $\Delta_n$. 
However, we keep the reference to $U$ and $V$ in what follows to remind the reader of the explicit connection to the subalgebras. 

Based on the block structure of $\mathcal A$ and $\mathcal B$ in equation~\eqref{eq:commutativecase}, we can obtain bases for the algebras as follows. Consider the partition of columns of $U$ into $\{U_1,\dots,U_{d_1}\}$ so that for $k=1,\dots,d_1$, $U_k$ has $m_k$ columns; and similarly, partition columns of $V$ into $\{V_1,\dots,V_{d_2}\}$ so that for $k=1,\dots,d_2$, $V_k$ has $n_k$ columns. Define $P_i = U_iU_i^*$ for $1\leq i\leq d_1$ and $R_j = V_jV_j^*$ for $1\leq j\leq d_2$. Then $P_i$ and $R_j$ are orthogonal projections with rank $m_i$ and $n_j$, respectively. Moreover, $\left\{\frac{1}{\sqrt{m_i}}P_i \mid 1\leq i\leq d_1\right\}$ and $\left\{\frac{1}{\sqrt{n_j}}R_j \mid 1\leq j\leq d_2\right\}$ are orthonormal bases of $\mathcal{A}$ and $\mathcal{B}$, respectively. 

We can use these decompositions to give a new matrix theoretic technique to compute $Q(\mathcal{A},\mathcal{B})$. For $n\times n$ unitary matrices $U$ and $V$, we let $X$ be the $n \times n$ matrix given by
$$
X=X(U,V) = (U^*V)\circ(\overline{U^*V}),
$$
where $\circ$ is the Schur (entry-wise) matrix product given by $U\circ V = [u_{i,j} v_{i,j}]$ when $U = [u_{i,j}]$ and $V=[v_{i,j}]$. This matrix will be important in the results below, and so we refer to it as the {\it block-index matrix of $\mathcal A$ and $\mathcal B$}. 
Note that $X$ is a doubly stochastic matrix: its entries satisfy $0\leq x_{i,j}\leq 1$, $\sum_{i=1}^nx_{i,j}=\sum_{j=1}^nx_{i,j}=1$. (The Schur product of any unitary with its conjugate is doubly-stochastic.) 

Given partitions $r = (m_1,\dots,m_{d_1})$ and $c = (n_1,\dots,n_{d_2})$ of $n$, we consider the matrix 
$X=X(U,V)$ with its rows partitioned according to $r$ and its columns partitioned according to $c$. Now we define $Y(X;r,c)$ to be the matrix $Y(X;r,c) = [y_{i,j}]$ where $y_{i,j}$ is given by the sum of all entries in $(i,j)$-block in $X$, for $1 \leq i \leq d_1$ and $1\leq j \leq d_2$, multiplied by $\frac{1}{\sqrt{m_in_j}}$. If $X,r,c$ are clear from the context, we write $Y(X;r,c)$ as $Y$. 

We illustrate these constructions with explicit details given in the simple example below. 

\begin{example}
{\rm 
    Let $U=I\in M_3$ and $$ V= \begin{bmatrix}
        0 & 1 & 0\\0 & 0 & 1\\1 & 0 & 0
    \end{bmatrix}.$$
    Then $X = V$. For $r = (2,1)$ and $c = (2,1)$, we have $d_1=2=d_2$ and so $Y(X;r,c)\in M_2$. Also, $m_1=2=n_1$, $m_2=1=n_2$ and the $(i,j)$-blocks of the 3 by 3 matrix $X$ here, for $1 \leq i \leq 2$, $1\leq j \leq 2$,  are:  
       $$
       X_{11} = \begin{bmatrix}
        0 & 1 \\
        0 & 0
    \end{bmatrix} 
    \quad 
           X_{12} = \begin{bmatrix}
        0  \\
        1
    \end{bmatrix} 
    \quad 
               X_{21} = \begin{bmatrix}
        1 & 0 
    \end{bmatrix} 
    \quad 
                   X_{22} = \begin{bmatrix}
        0 
    \end{bmatrix} .
    $$ 
Hence we have 
$$
y_{1,1} = \frac{1}{\sqrt{2^2}} (1+0+0+0) \quad \quad
y_{1,2} = \frac{1}{\sqrt{2}} (0+1) \quad \quad 
y_{2,1} = \frac{1}{\sqrt{2}} (1+0) \quad \quad 
y_{2,2} = \frac{1}{\sqrt{1}} (0) , 
$$
and so 
    $$Y(X;r,c) = \begin{bmatrix}
        \frac{1}{2} & \frac{1}{\sqrt{2}}\\
        \frac{1}{\sqrt{2}} & 0
    \end{bmatrix}.$$
}
\end{example}

We are now in the position to present a formula for the measure of orthogonality between two unital $*$-algebras based on the Frobenius norm of this matrix construction. 

\begin{theorem}\label{thm:Fnorm}
    Let $\mathcal{A},\mathcal{B}\subseteq M_n$ be unital commutative $*$-algebras. Let $U,V$ be the unitary matrices for $\mathcal A$, $\mathcal B$ given in equation~\eqref{eq:commutativecase} and let $X=X(U,V)$ be their block-index matrix, with $r = (m_1,\dots,m_{d_1})$ and $c = (n_1,\dots,n_{d_2})$.
    Then 
    \begin{equation}\label{QFrobenius} 
    Q (\mathcal{A},\mathcal{B})=\|Y(X;r,c)\|_F^2.
    \end{equation} 
\end{theorem}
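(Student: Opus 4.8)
The plan is to evaluate $Q(\mathcal{A},\mathcal{B})$ directly from the orthonormal-basis formula recorded in Section~\ref{sec:theory}, applied to the explicit orthonormal bases $\left\{\frac{1}{\sqrt{m_i}}P_i\right\}_{i=1}^{d_1}$ and $\left\{\frac{1}{\sqrt{n_j}}R_j\right\}_{j=1}^{d_2}$ of $\mathcal{A}$ and $\mathcal{B}$ built above from the column blocks of $U$ and $V$. Since these are orthonormal and the $P_i$, $R_j$ are Hermitian, this gives
$$
Q(\mathcal{A},\mathcal{B}) = \sum_{i=1}^{d_1}\sum_{j=1}^{d_2}\frac{1}{m_in_j}\bigl|\Tr(P_iR_j)\bigr|^2 ,
$$
so the whole statement reduces to identifying $\Tr(P_iR_j)$ with the appropriate block sum of the block-index matrix $X = X(U,V)$.

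For that key step, I would write $P_i = U_iU_i^*$ and $R_j = V_jV_j^*$ and use cyclicity of the trace to obtain $\Tr(P_iR_j) = \Tr\bigl(U_iU_i^*V_jV_j^*\bigr) = \Tr\bigl((U_i^*V_j)(U_i^*V_j)^*\bigr) = \|U_i^*V_j\|_F^2$. The matrix $U_i^*V_j$ is exactly the submatrix of $U^*V$ obtained by keeping the rows in the $i$-th block of the partition $r$ and the columns in the $j$-th block of $c$, so $\|U_i^*V_j\|_F^2$ is the sum of $|(U^*V)_{a,b}|^2$ over that block; by the definition $X = (U^*V)\circ\overline{(U^*V)}$ this is precisely the sum of all entries of $X$ lying in its $(i,j)$-block. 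Denoting that block sum by $s_{i,j}$, we get $\Tr(P_iR_j) = s_{i,j}$, and hence $y_{i,j} = s_{i,j}/\sqrt{m_in_j}$ by the definition of $Y(X;r,c)$.

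Finally I would assemble the pieces. Since the entries of $X$ are non-negative reals (being a Schur product of a unitary with its conjugate), each $s_{i,j}$, and therefore each $y_{i,j}$, is a non-negative real, so
$$
Q(\mathcal{A},\mathcal{B}) = \sum_{i,j}\frac{s_{i,j}^2}{m_in_j} = \sum_{i,j}y_{i,j}^2 = \sum_{i,j}|y_{i,j}|^2 = \|Y(X;r,c)\|_F^2 ,
$$
which is~\eqref{QFrobenius}. It is worth remarking that although the representation~\eqref{eq:commutativecase}, and hence $X$ and $Y$, is not unique, the quantity $\|Y(X;r,c)\|_F^2$ does not depend on the chosen $U$, $V$, since $Q(\mathcal{A},\mathcal{B})$ does not.

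I do not expect a genuine obstacle here: the only points requiring care are the index bookkeeping in matching $\|U_i^*V_j\|_F^2$ with the $(i,j)$-block sum of $X$, and the (routine) observation that $P_i$ and $R_j$ being Hermitian orthogonal projections makes $\Tr(P_iR_j)$ real and non-negative, so no modulus subtleties arise when passing from $|y_{i,j}|^2$ to $y_{i,j}^2$. The more substantive ingredients — that the displayed sum over orthonormal bases computes $Q$, and that $\left\{\frac{1}{\sqrt{m_i}}P_i\right\}$ and $\left\{\frac{1}{\sqrt{n_j}}R_j\right\}$ are orthonormal bases — are already in place from the material preceding the theorem.
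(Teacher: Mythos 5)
Your proposal is correct and follows essentially the same route as the paper's proof: apply the orthonormal-basis formula for $Q(\mathcal{A},\mathcal{B})$ to the projections $\frac{1}{\sqrt{m_i}}P_i$ and $\frac{1}{\sqrt{n_j}}R_j$, rewrite $\Tr(P_iR_j)$ via cyclicity as $\|U_i^*V_j\|_F^2$, and identify this with the $(i,j)$-block sum of $X$, which is exactly $\sqrt{m_in_j}\,y_{i,j}$. Your added remarks on non-negativity of the block sums and on the independence of the answer from the choice of $U,V$ are harmless elaborations of points the paper leaves implicit.
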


\begin{proof}
Let us first make the observation that we can write the $(i,j)$-entry of $Y$ as 
\[
y_{i,j} = \frac{1}{\sqrt{m_in_j}}\mathbf{1}^T[(U_i^*V_j)\circ(\overline{U_i^*V_j})]\mathbf{1}, 
\]
where $\mathbf{1}$ is the all-ones vector of appropriate length (here it is $n$-dimensional). 
Thus, using the form for $Q (\mathcal{A},\mathcal{B})$ discussed in the previous section (from \cite[Corollary~1]{kribs2021approximate}), and recalling the orthonormal bases for $\mathcal A,\mathcal B$ constructed above, we calculate: 
\begin{align*}
    Q (\mathcal{A},\mathcal{B}) =&\; \sum_{i,j} \left| \frac{1}{\sqrt{m_in_j}}\mathrm{Tr}(P_iR_j)\right|^2 \\
    =&\; \sum_{i,j} \left|\frac{1}{\sqrt{m_in_j}} \mathrm{Tr}\Big((U_i^*V_j)^*(U_i^*V_j)\Big)\right|^2\\
    =&\;\sum_{i,j}\left(\frac{1}{\sqrt{m_in_j}}\mathbf{1}^T[(U_i^*V_j)\circ(\overline{U_i^*V_j})]\mathbf{1}\right)^2\\
    =&\; \|Y(X;r,c)\|_F^2,   
\end{align*}
and the result follows.  
\end{proof}

Note a special case that is also discussed below: suppose that $X(U,V) = \frac{1}{n}J$, where $J$ is the all-ones matrix. Then $U^*V$ is a complex Hadamard matrix and $Q (\mathcal{A},\mathcal{B}) = 1$ regardless of the choice of $r$ and $c$. 

Let us consider the 2-dimensional case in some detail to illustrate the result.

\begin{example}
{\rm 
    Let $\mathcal{A},\mathcal{B}\subseteq M_2$ be unital commutative $*$-algebras, so each algebra is unitarily equivalent to either $\mathbb{C}\oplus \mathbb{C}$ or $\mathbb{C} I$. Let us consider the more interesting non-scalar ($\mathbb{C}\oplus \mathbb{C}$) case for both algebras.    
    Then, since  $U^*V$ is a $2$ by $2$ complex unitary matrix, it can be written as
    \begin{align*}
         \begin{bmatrix}
            a & b \\
            -e^{\mathrm{i}\tau}\bar{b} & e^{\mathrm{i}\tau}\bar{a}
        \end{bmatrix} \quad\text{for some $a,b\in\C$ with $|a|^2+|b|^2=1$ and $\tau\in\mathbb{R}$.}    
    \end{align*}
    Since $|a|^2 = \cos^2\theta$ and $|b|^2 = \sin^2\theta$ for some $\theta\in\mathbb{R}$, we have
    \begin{align*}
         X = \begin{bmatrix}
            \cos^2\theta & \sin^2\theta \\
            \sin^2\theta & \cos^2\theta
        \end{bmatrix}.    
    \end{align*}
    In this case we have $r =(1,1)$ and $c = (1,1)$, and so $X = Y(X;r,c)$ and  
    \[
    Q(\mathcal{A},\mathcal{B}) = \| Y(X;r,c) \|_2^2 = 2(\cos^4\theta+\sin^4\theta) .
    \]
    Thus we have $1\leq Q(\mathcal A, \mathcal B) \leq 2$ here. Let us find the extremal cases. 
    
    For the maximum, we have $\cos^4\theta+\sin^4\theta = 1$, which is equivalent to (after using basic trigonometric identities) requiring $\sin\theta=0$ or $\cos\theta=0$; in other words, $a=0$ or $b=0$. This case corresponds to the algebras $\mathcal A$ and $\mathcal B$ coinciding, and the unitary either fixes the algebra elements or permutes the entries of the diagonal matrix. This is intuitively what one would expect, that the furthest a pair of algebras from this set can be from being quasiorthogonal is when the algebras coincide. 

    For the minimum, and the case of quasiorthogonal algebras, we have $\cos^4\theta+\sin^4\theta = \frac12$, which is equivalent to $\cos^2\theta = \frac12 = \sin^2\theta$; that is,  $|a| = \frac{1}{\sqrt{2}} = |b|$. Thus, this case requires a certain balance between the combined entries of the unitary operators. For instance, in the case that $U=I$ and $V = \frac{1}{\sqrt{2}} \begin{bmatrix} 1 & 1\\ -1 & 1       
     \end{bmatrix}$, with $\mathcal A = \Delta_2$, yields $\mathcal B = V {\mathcal A} V^*$ given by the algebra of matrices, with $a,b\in \mathbb{C}$, 
     \[
     V \begin{bmatrix} a & 0\\ 0 & b  \end{bmatrix} V^* = \frac12 \begin{bmatrix} a+b & b-a\\ b-a & a+b   
     \end{bmatrix} . 
     \]
     We can connect this observation with our earlier result Proposition~\ref{thm:cH}. Indeed, with $\mathcal A = \Delta_2$, that result tells us $B = V {\mathcal A} V^*$ is quasiorthogonal to $\mathcal A$ if and only if $\sqrt{2} V$ is a Hadamard matrix, which one can readily verify is equivalent to the set of conditions we have obtained above on the matrix entries of the unitary. 
}
\end{example}

Based on these matrix constructions and the previous result, we shall provide a matrix-theoretic condition for two unital, commutative $*$-algebras to be quasiorthogonal. We begin with the following definition.

\begin{definition}
    An $n$ by $n$ doubly stochastic matrix $D$ is said to be \emph{quasiable} with respect to partitions $r = (m_1,\dots,m_{d_1})$ and $c = (n_1,\dots,n_{d_2})$ of $n$ if the matrix $D$ whose rows and columns partitioned according to $r$ and $c$, respectively,  satisfies the condition that for all $1\leq i\leq d_1$, $1\leq j \leq d_2$,  the sum of all entries in the $(i,j)$-block is equal to $\frac{m_in_j}{n}$.
\end{definition}

\begin{rem}
{\rm 
	 We can make some observations on the extreme cases of the property of being quasiable. Let $D\in M_n$ be a doubly stochastic matrix. Then: 
	\begin{itemize}
		\item If $D = \frac{1}{n}J$, then there exists a unitary matrix $U$ with $D = U\circ \overline{U}$ 
        and $D$ is quasiable with respect to any $r$ and $c$.
		\item  Suppose $r=c= (1,\dots,1)$. Then $D$ is quasiable with respect to $r$ and $c$ if and only if $D = \frac{1}{n}J$.
		\item If one of $r$ and $c$ is $(n,0,\dots,0)$, then $D$ is quasiable with respect to $r$ and $c$.
	\end{itemize}
    }
\end{rem}


We now present the main result of this section.

\begin{theorem}\label{thm:commutative and quasi-ortho}
    Let $\mathcal{A},\mathcal{B}\subseteq M_n$ be unital commutative $*$-algebras and let $X=X(U,V)$ be their block-index matrix. 
    Then   $\mathcal{A}$ and $\mathcal{B}$ are quasiorthogonal if and only if $X$ is quasiable with respect to $(m_1,\dots,m_{d_1})$ and $(n_1,\dots,n_{d_2})$.
\end{theorem}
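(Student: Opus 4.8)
The plan is to deduce this from the Frobenius-norm formula in Theorem~\ref{thm:Fnorm} together with the characterization that quasiorthogonality is equivalent to $Q(\mathcal{A},\mathcal{B}) = 1$ (Corollary~1 of \cite{kribs2021approximate}, recalled in Section~\ref{sec:theory}). By Theorem~\ref{thm:Fnorm} we have $Q(\mathcal{A},\mathcal{B}) = \|Y(X;r,c)\|_F^2 = \sum_{i,j} y_{i,j}^2$, where $y_{i,j} = \frac{1}{\sqrt{m_i n_j}} s_{i,j}$ and $s_{i,j}$ denotes the sum of all entries in the $(i,j)$-block of $X$. So the goal reduces to showing that $\sum_{i,j} \frac{s_{i,j}^2}{m_i n_j} = 1$ if and only if $s_{i,j} = \frac{m_i n_j}{n}$ for every $i,j$.

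First I would record the basic constraints on the block sums $s_{i,j}$ coming from the fact that $X$ is doubly stochastic. Summing the row-sum condition over the rows inside block-row $i$ gives $\sum_{j=1}^{d_2} s_{i,j} = m_i$ for each $i$, and similarly $\sum_{i=1}^{d_1} s_{i,j} = n_j$ for each $j$; consequently $\sum_{i,j} s_{i,j} = n$. The key step is then a convexity/Cauchy–Schwarz argument: with weights $w_{i,j} = \frac{m_i n_j}{n^2}$ (which are nonnegative and sum to $1$, since $\sum_i m_i = n = \sum_j n_j$), write
\[
\sum_{i,j} \frac{s_{i,j}^2}{m_i n_j} = n^2 \sum_{i,j} w_{i,j} \left(\frac{s_{i,j}}{m_i n_j}\right)^2 \geq n^2 \left(\sum_{i,j} w_{i,j} \frac{s_{i,j}}{m_i n_j}\right)^2 = n^2 \left(\frac{1}{n^2}\sum_{i,j} s_{i,j}\right)^2 = 1,
\]
by Jensen's inequality applied to $t \mapsto t^2$. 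This reproves $Q(\mathcal{A},\mathcal{B}) \geq 1$ in this concrete setting, and — more importantly — since $t \mapsto t^2$ is strictly convex, equality holds if and only if the quantity $\frac{s_{i,j}}{m_i n_j}$ is constant across all $(i,j)$ with $w_{i,j} > 0$ (all of them, as every $m_i, n_j \geq 1$). That common constant must be $\frac{1}{n}$ by the normalization $\sum s_{i,j} = n$, which is exactly the quasiable condition $s_{i,j} = \frac{m_i n_j}{n}$.

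For the converse direction one just checks directly that if $s_{i,j} = \frac{m_i n_j}{n}$ for all $i,j$ then $y_{i,j} = \frac{1}{\sqrt{m_i n_j}} \cdot \frac{m_i n_j}{n} = \frac{\sqrt{m_i n_j}}{n}$, so $\|Y\|_F^2 = \frac{1}{n^2}\sum_{i,j} m_i n_j = \frac{1}{n^2}\left(\sum_i m_i\right)\left(\sum_j n_j\right) = 1$, whence $Q(\mathcal{A},\mathcal{B}) = 1$ and the algebras are quasiorthogonal. I expect the main (though modest) obstacle to be making the equality-case analysis airtight — in particular confirming that all weights $w_{i,j}$ are strictly positive so that strict convexity forces equality everywhere, not merely on a subset of blocks; this is where the hypothesis that $r$ and $c$ are genuine partitions of $n$ into positive parts (rather than compositions allowing zero parts) is quietly used. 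Alternatively, one could bypass the general $Q=1$ characterization and argue directly from Definition~\ref{q.o}(2) using the orthonormal bases $\{\frac{1}{\sqrt{m_i}}P_i\}$, $\{\frac{1}{\sqrt{n_j}}R_j\}$: quasiorthogonality means $\mathrm{Tr}(P_i R_j) = \frac{\mathrm{Tr}(P_i)\mathrm{Tr}(R_j)}{n} = \frac{m_i n_j}{n}$ for all $i,j$, and since $\mathrm{Tr}(P_i R_j)$ is precisely the block sum $s_{i,j}$ (as computed in the proof of Theorem~\ref{thm:Fnorm}), this is literally the quasiable condition, giving a one-line proof. I would likely present this direct argument as the cleaner route and mention the norm-formula derivation as a remark.
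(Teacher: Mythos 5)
Your proposal is correct. Your primary argument — Jensen's inequality for $t\mapsto t^2$ with weights $w_{i,j}=m_in_j/n^2$ applied to $\sum_{i,j}s_{i,j}^2/(m_in_j)$ — is the same inequality the paper uses, just in different clothing: the paper writes $Q(\mathcal{A},\mathcal{B})=\frac{1}{n^2}\|Y\|_F^2\|Z\|_F^2\geq\frac{1}{n^2}\langle Y,Z\rangle^2$ with $Z=[\sqrt{m_in_j}]$ and uses $\langle Y,Z\rangle=\sum_{i,j}s_{i,j}=n$, identifying the equality case with $Y\propto Z$, i.e.\ $s_{i,j}=m_in_j/n$. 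Your weighted-Jensen formulation is exactly Cauchy--Schwarz for these two matrices, so the equality analyses coincide; your explicit remark that all weights are strictly positive (every $m_i,n_j\geq 1$) is a point the paper leaves implicit and is worth having on record. Your alternative one-line argument is, however, genuinely different from the paper's route and is cleaner: by bilinearity, Definition~\ref{q.o}(2) need only be checked on the spanning sets $\{P_i\}$ and $\{R_j\}$, where it reads $\mathrm{Tr}(P_iR_j)=\frac{m_in_j}{n}$, and the proof of Theorem~\ref{thm:Fnorm} already identifies $\mathrm{Tr}(P_iR_j)$ with the $(i,j)$-block sum of $X$ — so quasiorthogonality \emph{is} the quasiable condition, with no appeal to the $Q=1$ characterization or any extremal inequality. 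What the paper's (and your first) approach buys in exchange for the extra work is the quantitative by-product $Q(\mathcal{A},\mathcal{B})\geq 1$ re-derived in this concrete block-sum setting, together with an identification of exactly how far $Y$ is from the extremal matrix $Z$; the direct approach buys brevity and makes Corollary~\ref{cor:zero entry} equally immediate. Either version is acceptable.
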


\begin{proof}
    Let $Z = \left[\sqrt{m_in_j}\right]$ for $1\leq i\leq d_1$ and $1\leq j\leq d_2$. Since $\sum_{i,j}m_in_j = n^2$, we have $\|Z\|_F^2 = n^2$. With $Y=Y(X;r,c)$, we find from Theorem~\ref{thm:Fnorm}, and applying the Cauchy-Schwarz inequality for the trace inner-product, that
    \[
    Q(\mathcal{A},\mathcal{B}) = \frac{1}{n^2}\| Y\|_F^2 \| Z\|_F^2  
    \geq  \frac{1}{n^2}\langle Y,Z \rangle^2 .
    \]

Note that equality holds if and only if $Y$ and $Z$ are linearly dependent. But $Y$ and $Z$ are linearly dependent if and only if there exists $k\in\mathbb{C}$ such that $y_{i,j} = k\sqrt{m_in_j}$ for all $i,j$. Using $\sum_{i,j}y_{i,j}\sqrt{m_in_j} = n$ and $\sum_{i,j}m_in_j = n^2$, we have $k = 1/n$. Moreover, $\langle Y,Z \rangle^2 = n^2$. Hence, the case of $Y,Z$ linearly dependent corresponds separately to the case that $X$ is quasiable and to the case that $Q(\mathcal A, \mathcal B)=1$ (and the algebras are quasiorthogonal). This completes the proof.
\end{proof}

We point out a necessary condition for quasiorthogonality that follows from the proof of Theorem~\ref{thm:commutative and quasi-ortho}. 

\begin{cor}\label{cor:zero entry}
    Let $\mathcal{A},\mathcal{B}\subseteq M_n$ be unital commutative $*$-algebras. 
    Given partitions $r$ and $c$ of $n$, if $Y(X;r,c)$ contains an entry equal to $0$, then $\mathcal{A}$ and $\mathcal{B}$ are not quasiorthogonal.
\end{cor}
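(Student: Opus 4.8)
The plan is to derive this corollary directly from the chain of (in)equalities established in the proof of Theorem~\ref{thm:commutative and quasi-ortho}, exploiting the fact that quasiorthogonality was characterized there via an equality case in Cauchy--Schwarz. First I would recall that $Y = Y(X;r,c)$ has nonnegative real entries: each $y_{i,j}$ is $\frac{1}{\sqrt{m_in_j}}$ times the sum of entries of the $(i,j)$-block of the doubly stochastic matrix $X$, and the entries of $X$ lie in $[0,1]$, so $y_{i,j}\ge 0$. Likewise the matrix $Z = \left[\sqrt{m_in_j}\right]$ has strictly positive entries. The key point is that, as shown in that proof, $\mathcal{A}$ and $\mathcal{B}$ are quasiorthogonal if and only if $Y$ and $Z$ are linearly dependent, and in that case the proportionality constant is $k = 1/n$, so $y_{i,j} = \tfrac{1}{n}\sqrt{m_in_j}$ for every $i,j$.

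The heart of the argument is then immediate: if $Y$ has some entry equal to $0$, say $y_{i_0,j_0} = 0$, then since $\sqrt{m_{i_0}n_{j_0}} > 0$ we have $y_{i_0,j_0} \ne \tfrac1n\sqrt{m_{i_0}n_{j_0}}$, so $Y$ is not the scalar multiple $\tfrac1n Z$ of $Z$. Because $Z$ has no zero entries, $Y$ and $Z$ cannot be linearly dependent at all (a zero entry of $Y$ against a nonzero entry of $Z$ rules out $Y = kZ$, and $Z = k'Y$ is impossible since $Z$ has a nonzero entry where $Y$ vanishes). Hence the Cauchy--Schwarz inequality in the proof of Theorem~\ref{thm:commutative and quasi-ortho} is strict, giving $Q(\mathcal{A},\mathcal{B}) > 1$, and therefore $\mathcal{A}$ and $\mathcal{B}$ are not quasiorthogonal by the discussion following Definition~\ref{q.o} and the characterization of the minimum value of $Q$.

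I do not anticipate a genuine obstacle here — the corollary is essentially a restatement of one direction of Theorem~\ref{thm:commutative and quasi-ortho} together with the observation that $Z$ is entrywise positive. The only point requiring a sentence of care is making explicit that linear dependence of $Y$ and $Z$ forces \emph{equality} of $Y$ with $\tfrac1n Z$ (rather than merely $Y = kZ$ for some $k$), which is exactly what was pinned down in the earlier proof via the normalizations $\sum_{i,j} y_{i,j}\sqrt{m_in_j} = n$ and $\sum_{i,j} m_in_j = n^2$. With that in hand the corollary follows in two or three lines.
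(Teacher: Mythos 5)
Your proof is correct and follows exactly the route the paper intends: the corollary is stated as a direct consequence of the proof of Theorem~\ref{thm:commutative and quasi-ortho}, where quasiorthogonality was shown to force $y_{i,j} = \tfrac{1}{n}\sqrt{m_i n_j} > 0$ for all $i,j$. Your additional care about why linear dependence of $Y$ and $Z$ pins down the constant $k=1/n$ is a faithful restatement of the normalization argument already in that proof, so nothing further is needed.
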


The following result collects some information about $Q(\mathcal{A},\mathcal{B})$ under certain assumptions from the previous results.

\begin{cor}
    Let $\mathcal{A},\mathcal{B}\subseteq M_n$ be unital commutative $*$-algebras. 
    Given partitions $r$ and $c$ of $n$, the following hold:
    \begin{enumerate}
        \item If $r = c = (1,\dots,1)$ then $Q (\mathcal{A},\mathcal{B})$ is the square of the Frobenius norm of a doubly stochastic matrix.
        \item If one of $r$ and $c$ is $(n,0,\dots,0)$, then $\mathcal{A}$ and $\mathcal{B}$ are quasiorthogonal.
        \item Suppose $U = V$. Then $\mathcal{A}$ and $\mathcal{B}$ are quasiorthogonal if and only if one of $r$ and $c$ is $(n,0,\dots,0)$. In particular, if $r= c = (m_1,\dots,m_{d_1})$, then $Q (\mathcal{A},\mathcal{B}) = d_1$.
    \end{enumerate}
\end{cor}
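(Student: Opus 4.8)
The plan is to handle the three parts by reducing each to the matrices $X=X(U,V)$ and $Y=Y(X;r,c)$ and quoting Theorem~\ref{thm:Fnorm} and Theorem~\ref{thm:commutative and quasi-ortho}. For part (1), when $r=c=(1,\dots,1)$ every $(i,j)$-block of $X$ is a single scalar entry and the normalization $\tfrac{1}{\sqrt{m_in_j}}$ equals $1$, so $Y(X;r,c)=X$; since the Schur product of a unitary with its conjugate is doubly stochastic, $X$ is a doubly stochastic matrix and Theorem~\ref{thm:Fnorm} gives $Q(\mathcal A,\mathcal B)=\|X\|_F^2$. For part (2), by symmetry we may assume $r=(n,0,\dots,0)$; then the only nonempty row-block is all of $\{1,\dots,n\}$, so the $(1,j)$-block of $X$ is the full $j$-th column-block, whose entries sum to $n_j=\tfrac{n\cdot n_j}{n}$ because $X$ has column sums $1$, while every other (empty) block has block-sum $0$. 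Thus $X$ is quasiable with respect to $r$ and $c$, and Theorem~\ref{thm:commutative and quasi-ortho} yields quasiorthogonality (this is also immediate from the third bullet of the remark preceding that theorem).

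For part (3), when $U=V$ we have $X=X(U,V)=(U^*V)\circ\overline{(U^*V)}=I_n\circ\overline{I_n}=I_n$. Writing $R_i,C_j\subseteq\{1,\dots,n\}$ for the consecutive index sets defining the $i$-th row-block and $j$-th column-block, the sum of the entries of the $(i,j)$-block of $I_n$ is $|R_i\cap C_j|$, the length of the overlap of these intervals; hence, by Theorem~\ref{thm:commutative and quasi-ortho}, $\mathcal A$ and $\mathcal B$ are quasiorthogonal exactly when $|R_i\cap C_j|=\tfrac{m_in_j}{n}$ for all $i,j$. The ``if'' direction is part (2). For ``only if'', I would observe that the constraint is vacuous unless $m_i>0$ and $n_j>0$, and that for the smallest such indices $i_0,j_0$ the intervals $R_{i_0}$ and $C_{j_0}$ both start at $1$, so $|R_{i_0}\cap C_{j_0}|=\min(m_{i_0},n_{j_0})$; the identity $\min(m_{i_0},n_{j_0})=\tfrac{m_{i_0}n_{j_0}}{n}$ then forces $m_{i_0}=n$ or $n_{j_0}=n$, i.e.\ $r$ or $c$ is $(n,0,\dots,0)$. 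Finally, for the ``in particular'' statement, taking $r=c=(m_1,\dots,m_{d_1})$ with all $m_i>0$ gives $R_i=C_i$, hence $|R_i\cap C_j|=\delta_{ij}m_i$ and $y_{i,j}=\tfrac{1}{\sqrt{m_im_j}}\,\delta_{ij}m_i=\delta_{ij}$, so $Y=I_{d_1}$ and Theorem~\ref{thm:Fnorm} gives $Q(\mathcal A,\mathcal B)=\|Y\|_F^2=d_1$.

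Parts (1) and (2) are essentially bookkeeping, and the only step requiring a moment's thought is the ``only if'' direction in part (3); the mild subtlety there is that the partitions may contain zero parts, which I would circumvent by restricting attention to the first nonzero part of each, after which the product identity $\min(m_{i_0},n_{j_0})=m_{i_0}n_{j_0}/n$ closes the argument at once. I do not expect any genuine obstacle.
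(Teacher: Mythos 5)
Your proposal is correct and works within the same framework as the paper (the block-index matrix $X$, the normalized block-sum matrix $Y$, Theorem~\ref{thm:Fnorm} and Theorem~\ref{thm:commutative and quasi-ortho}); parts (1) and (2) match the paper's argument essentially verbatim. The one place you genuinely diverge is the ``only if'' direction of part (3). The paper argues via Corollary~\ref{cor:zero entry}: with $X=I$ it claims $Y$ has few enough nonzero entries (a counting/pigeonhole step on the overlaps of the two interval partitions) that some entry of $Y$ must vanish, whence non-quasiorthogonality. You instead make a single local computation: the first nonzero row-block and column-block both start at index $1$, so the relevant block sum is $\min(m_{i_0},n_{j_0})$, and the quasiable identity $\min(m_{i_0},n_{j_0})=m_{i_0}n_{j_0}/n$ forces one of them to equal $n$. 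Your version is arguably tighter --- the paper's phrasing that ``any entry in $Y$ is either $0$ or $1$'' is not literally true for general unequal partitions (the entries are $|R_i\cap C_j|/\sqrt{m_in_j}$), whereas your $\min$ identity closes the argument without any global count. The only cosmetic caveat is that when you conclude $n_{j_0}=n$ you should note that all remaining parts are then zero, so $c$ is the trivial partition up to the placement of leading zeros; since the partitions arise from the block decomposition of the algebras, this is immaterial.
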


\begin{proof}
    Statement 1 is straightforward from the above results.

    Suppose that one of $r$ and $c$ is $(n,0,\dots,0)$. Since $X$ is doubly stochastic, $Y$ is the all-ones vector multiplied by $1/\sqrt{n}$. This proves Statement $2$.

    Consider Statement 3. It is straightforward to see that if $r= c = (m_1,\dots,m_{d_1})$, then $Q (\mathcal{A},\mathcal{B}) = d_1$. From Statement 2, it suffices to show that if $r$ and $c$ both are not $(n,0,\dots,0)$, then $\mathcal{A}$ and $\mathcal{B}$ are not quasiorthogonal. Since $U = V$, we have $X(U,V) = I$. So, any entry in $Y$ is either $0$ or $1$, and $Y$ has $n$ ones. If $r$ and $c$ both are not $(n,0,\dots,0)$, then $Y$ contains at least $2n$ entries so that there must be an entry that is $0$. By Corollary~\ref{cor:zero entry}, we obtain our desired result. 
\end{proof}

Given the direct connection between unital commutative $*$-algebras $\mathcal{A}, \mathcal{B}$ being quasiorthogonal and their doubly stochastic block-index matrix $X=(U^*V)\circ(\overline{U^*V})$ being quasiable in  Theorem~\ref{thm:commutative and quasi-ortho}, it will be useful if we can quickly determine whether  $X$ is quasiable. For instance, consider the following matrix:
\begin{align}\label{ex:quasiable but not ortho}
	\begin{bmatrix}
	\frac{1}{3} & \frac{2}{3} & 0 \\
	0 & \frac{1}{3} & \frac{2}{3}\\
	\frac{2}{3}  & 0 & \frac{1}{3}
	\end{bmatrix}.
\end{align}
This matrix is not quasiable with respect to $(1,1,1)$ and $(1,1,1)$, but it is quasiable with respect to $(2,1)$ and $(2,1)$. Therefore, if $(U^*V)\circ(\overline{U^*V})$ is of the form in equation~\eqref{ex:quasiable but not ortho}, then $\mathcal{A}$ and $\mathcal{B}$ are quasiorthogonal. However, as will be shown shortly, there are no unitary matrices $U$ and $V$ such that $(U^*V)\circ(\overline{U^*V})$ is of the form in equation~\eqref{ex:quasiable but not ortho}. 

The following definition helps to clarify the situation.

\begin{definition}
	A \emph{unistochastic} 
	matrix is a doubly stochastic matrix whose entries are the squares of the absolute values of the entries of some unitary matrix. 
\end{definition}

We therefore explore the conditions under which a given doubly stochastic matrix is quasiable and unistochastic. Specifically, given an $n$ by $n$ unistochastic matrix $D$ and partitions of $r$ and $c$ of $n$, under what condition is $D$ quasiable with respect to $r$ and $c$? Resolving this question will offer criteria for determining whether two commutative subalgebras are quasiorthogonal.
However, given a doubly stochastic matrix, determining whether it is unistochastic is a challenging problem. In the case of a 3 by 3 matrix, this has been established, as shown in the following proposition from \cite{au19793} (note that the term \textit{orthostochastic} is used in \cite{au19793} to refer to unistochastic matrices). Using this result, we can see that the doubly stochastic matrix in equation~\eqref{ex:quasiable but not ortho} for instance is not unistochastic.

\begin{theorem}\cite{au19793}
	Let $A = [a_{i,j}]_{i,j=1,2,3}$ be a doubly stochastic 3 by 3 matrix. The following hold:
	\begin{enumerate}
		\item[(i)] If $A$ is unistochastic, then for any $j\neq j'$ and for any $l$, 
		\begin{align}\label{temp:eqn}
		\sqrt{a_{l,j}a_{l,j'}}\leq \sum_{\substack{i=1\\i\neq l}}^3\sqrt{a_{i,j}a_{i,j'}} . 
		\end{align}
		\item[(ii)] Conversely, if there exist $j\neq j'$ such that for any $l$, the inequality \eqref{temp:eqn} holds, then $A$ is unistochastic.
	\end{enumerate}
\end{theorem}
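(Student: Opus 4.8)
The plan is to exploit the fact that unistochasticity of a $3\times 3$ matrix is controlled entirely by the pairwise orthogonality relations between columns of the underlying unitary, and that for $n=3$ each such relation amounts to a ``closed triangle'' condition on three complex numbers. The necessary direction (i) comes from reading off the triangle inequality; the sufficient direction (ii) comes from reconstructing a unitary column by column, with the crucial point being that the third column is forced automatically once the first two are built.

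For part (i), suppose $A$ is unistochastic, so $a_{i,j} = |u_{i,j}|^2$ for some unitary $U = [u_{i,j}]$. Fix any $j \neq j'$. Orthogonality of the $j$th and $j'$th columns of $U$ says $\sum_{i=1}^{3} u_{i,j}\overline{u_{i,j'}} = 0$, and the $i$th summand $z_i := u_{i,j}\overline{u_{i,j'}}$ has modulus $\sqrt{a_{i,j}a_{i,j'}}$. Thus three complex numbers of these prescribed moduli sum to zero; writing $z_l = -\sum_{i\neq l} z_i$ and applying the triangle inequality yields $\sqrt{a_{l,j}a_{l,j'}} = |z_l| \leq \sum_{i\neq l}|z_i| = \sum_{i\neq l}\sqrt{a_{i,j}a_{i,j'}}$, which is exactly \eqref{temp:eqn}. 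Since $j\neq j'$ and $l$ were arbitrary, this is the claimed necessary condition.

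For part (ii), after permuting columns (which preserves unistochasticity) we may assume the good pair is $(j,j')=(1,2)$, so the numbers $t_i := \sqrt{a_{i,1}a_{i,2}}$ satisfy all three triangle inequalities. By the elementary planar fact that three segments whose lengths obey all triangle inequalities close up into a (possibly degenerate) triangle, there exist phases $\phi_1,\phi_2,\phi_3$ with $\sum_{i=1}^{3} t_i e^{\mathrm{i}\phi_i} = 0$. Set $u_{i,1} := \sqrt{a_{i,1}}$ and $u_{i,2} := \sqrt{a_{i,2}}\,e^{\mathrm{i}\phi_i}$. Because $A$ is doubly stochastic, $\sum_i |u_{i,1}|^2 = \sum_i a_{i,1} = 1 = \sum_i a_{i,2} = \sum_i |u_{i,2}|^2$, and $\sum_i u_{i,1}\overline{u_{i,2}} = \sum_i t_i e^{-\mathrm{i}\phi_i} = 0$, so these two vectors form an orthonormal pair in $\mathbb{C}^3$. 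Let $M$ be the $3\times 2$ matrix with these columns; then $MM^*$ is the rank-two orthogonal projection onto their span, with diagonal $(MM^*)_{i,i} = |u_{i,1}|^2 + |u_{i,2}|^2 = a_{i,1}+a_{i,2} = 1 - a_{i,3}$ by the row-sum condition. Hence $I_3 - MM^*$ is a rank-one orthogonal projection, so it equals $ww^*$ for a unit vector $w\in\mathbb{C}^3$ with $|w_i|^2 = a_{i,3}$. Taking $u_{i,3} := w_i$ completes $U$ to a matrix with orthonormal columns, i.e.\ a unitary, with $|u_{i,j}|^2 = a_{i,j}$ for all $i,j$; thus $A$ is unistochastic.

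The one genuinely non-routine step is recognizing in part (ii) that the third column costs nothing: once two orthonormal columns with the prescribed entrywise moduli are in place, the complementary one-dimensional projection automatically carries the correct diagonal, so the single pair of triangle inequalities for $(1,2)$ suffices and no compatibility condition on the third column needs to be imposed. The phase-realization step is classical, and the remaining verifications are immediate from the two defining properties of a doubly stochastic matrix (column sums for the normalization of columns $1$ and $2$, row sums for the moduli of column $3$).
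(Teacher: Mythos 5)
Your proof is correct. Note that the paper does not prove this statement at all --- it is quoted from Au-Yeung and Poon \cite{au19793} --- so there is no in-paper argument to compare against; your reconstruction is essentially the classical one. Part (i) is the straightforward triangle inequality applied to the three terms of a column-orthogonality relation. Part (ii) is where the content lies, and you have correctly identified and justified the key point: once two orthonormal columns with the prescribed entrywise moduli are built from the single good pair $(j,j')$, the complementary rank-one projection $I_3 - MM^*$ automatically has diagonal $a_{i,3}$ by row-stochasticity, so the third column comes for free and no further compatibility conditions are needed. This also cleanly explains the asymmetry in the statement (all pairs in (i), a single pair in (ii)). The phase-realization step (three nonnegative lengths satisfying all triangle inequalities can be closed into a triangle in $\mathbb{C}$) is standard and covers the degenerate cases. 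No gaps.
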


\subsection{Group Algebras and Orthogonal Latin Squares}

The connection between the orthogonality of Latin squares  and commutative C$^*$-algebras was studied in \cite{watatani1994latin}. We describe in this subsection how one can view the measure of orthogonality of unital commutative $*$-algebras coming from group algebras in terms of the orthogonality of the corresponding Latin squares. 


An order $n$ Latin square $\mathcal{L}$   whose entries are $\{1,2,...,n\}$ is an $n$ by $n$ array  consisting of   entries  in $\{1,2,...,n\}$ where each value occurs exactly once in each row and exactly once in each column.  We say that an $n$ by $n$ matrix $A$ has pattern $\mathcal{L}$ if there exist $n$ complex numbers $c_1,c_2,...,c_n$ such that $a_{i,j}=c_k$ whenever the $(i,j)$-entry of $\mathcal{L}$ is $k$.  Let $A(\mathcal{L})$ be the set of all matrices with pattern $\mathcal{L}$. Then $A(\mathcal{L})$  is a commutative $*$-subalgebra of  $(M_n, \circ)$,  the C$^*$-algebra of $n$ by $n$ complex matrices where multiplication is the Schur (entrywise) product instead of ordinary multiplication. We note that in  $(M_n, \circ)$,  the adjoint is entrywise conjugation (without taking the transpose) and  the identity element is $J$, the all-ones matrix.  We can now relate the orthogonality of the Latin squares to the quasiorthogonality of their associated $*$-algebras.

\begin{prop}\label{prop:LS} Let $\mathcal{L}_1$ and $\mathcal{L}_2$ be two $n$ by $n$ Latin squares. Then $\mathcal{L}_1$ and $\mathcal{L}_2$ are orthogonal Latin squares if and only if $A(\mathcal{L}_1)$ and $A(\mathcal{L}_2)$ are quasiorthogonal $*$-subalgebras of  $(M_n, \circ)$.
\end{prop}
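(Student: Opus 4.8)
The plan is to transport the definition of quasiorthogonality to the algebra $(M_n,\circ)$ via the identification $(M_n,\circ)\cong\mathbb{C}^{n^2}$, under which the role of the matrix trace is played by $\tau(A):=\mathbf{1}^T A\mathbf{1}=\sum_{i,j}a_{i,j}$ (the sum of all entries), the adjoint is entrywise conjugation, the identity is $J$, and the ambient dimension is $N=n^2$ — note $\tau(J)=n^2$. With this dictionary, I would use condition (2) of Definition~\ref{q.o} in the form appropriate to $(M_n,\circ)$: the subalgebras $A(\mathcal{L}_1)$ and $A(\mathcal{L}_2)$ are quasiorthogonal if and only if $\tau(A\circ B)=\tau(A)\tau(B)/n^2$ for all $A\in A(\mathcal{L}_1)$ and $B\in A(\mathcal{L}_2)$.

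Next I would record a convenient basis. For a Latin square $\mathcal{L}$ on symbols $\{1,\dots,n\}$, let $E_k=E_k(\mathcal{L})$ be the $0$-$1$ matrix with $(E_k)_{i,j}=1$ exactly when the $(i,j)$-entry of $\mathcal{L}$ equals $k$. Each $E_k$ is a permutation matrix, the $E_k$ have pairwise disjoint supports with $\sum_{k=1}^n E_k=J$, and $\{E_1,\dots,E_n\}$ is a basis of $A(\mathcal{L})=\bigoplus_{k=1}^n\mathbb{C}E_k$; in particular $\tau(E_k)=n$ for every $k$. Writing $F_l=E_l(\mathcal{L}_2)$, I observe that both sides of the identity in condition (2) are bilinear in $(A,B)$, so it suffices to check it on the basis pairs $(E_k,F_l)$. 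There the right-hand side is $\tau(E_k)\tau(F_l)/n^2=n\cdot n/n^2=1$, while the left-hand side is $\tau(E_k\circ F_l)=\sum_{i,j}(E_k)_{i,j}(F_l)_{i,j}$, which is exactly the number of cells $(i,j)$ carrying symbol $k$ in $\mathcal{L}_1$ and symbol $l$ in $\mathcal{L}_2$. Thus quasiorthogonality amounts to: every ordered pair $(k,l)$ occurs in exactly one cell of the superimposition of $\mathcal{L}_1$ and $\mathcal{L}_2$ — which is precisely the definition of orthogonal Latin squares. For the ``only if'' direction one uses that $E_k\circ F_l$ has entries in $\{0,1\}$, so $\tau(E_k\circ F_l)=1$ forces a single nonzero entry; for the ``if'' direction the count equals $1$ by orthogonality, and bilinearity propagates the identity to all of $A(\mathcal{L}_1)\times A(\mathcal{L}_2)$.

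The one step that needs genuine care — really the only obstacle — is the bookkeeping in the first paragraph: one must use $\tau(A)=\sum_{i,j}a_{i,j}$ rather than the ordinary matrix trace of $A$, and the value $N=n^2$ for the ambient dimension, since the unit of $(M_n,\circ)$ is $J$ and $\tau(J)=n^2$; getting either of these wrong scrambles the normalization constant. Once that is pinned down, no deeper machinery is required. As an alternative route (worth a remark) one could instead push $A(\mathcal{L}_1)$ and $A(\mathcal{L}_2)$ through the $*$-isomorphism $(M_n,\circ)\cong\Delta_{n^2}$ and apply Theorem~\ref{thm:commutative and quasi-ortho} to the resulting commutative subalgebras of $M_{n^2}$, checking that the block-index matrix is quasiable; the direct computation above, however, is shorter and keeps the Latin-square combinatorics transparent.
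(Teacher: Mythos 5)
Your proposal is correct, and its core bookkeeping coincides with the paper's: both identify the operator-theoretic trace on $(M_n,\circ)$ as the sum of all entries (with ambient dimension $n^2$ and unit $J$), both use the permutation-matrix basis of $A(\mathcal{L})$, and both reduce everything to the count $|S_{k,l}|$ of cells carrying symbol $k$ in $\mathcal{L}_1$ and $l$ in $\mathcal{L}_2$. The routes diverge only at the last step. You verify condition (2) of Definition~\ref{q.o} directly on the basis pairs $(E_k,F_l)$ and invoke bilinearity, which gives the equivalence with no inequality needed and makes the ``only if'' direction transparent (a $0$--$1$ matrix with entry sum $1$ has a single nonzero entry). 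The paper instead computes the full measure $Q(A(\mathcal{L}_1),A(\mathcal{L}_2))=n^{-2}\sum_{k,l}|S_{k,l}|^2$ and applies Jensen's inequality with the constraint $\sum_{k,l}|S_{k,l}|=n^2$ to show $Q\geq 1$ with equality iff all $|S_{k,l}|=1$. Your argument is marginally more elementary; the paper's buys the explicit quantitative formula for $Q$, which it exploits in the remark that follows (e.g., asking how small $Q$ can be for $6\times 6$ Latin squares, where no orthogonal pair exists). Your suggested alternative via $(M_n,\circ)\cong\Delta_{n^2}$ and Theorem~\ref{thm:commutative and quasi-ortho} would also work but is indeed heavier than necessary.
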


\begin{proof}  Let $\mathcal{L}_1$ and $\mathcal{L}_2$ be two $n$ by $n$ Latin squares.  We prove this proposition by calculating the measure of orthogonality between $A(\mathcal{L}_1)$ and $A(\mathcal{L}_2)$.   The element in this C$^*$-algebra $(M_n, \circ)$ represented by the matrix $A$ can be viewed as a linear operator which maps each $n$ by $n$ matrix $X$ to the matrix $A\circ X$. Hence the operator-theoretic trace of this element is $\sum_{i,j=1}^n a_{i,j}$.  

For any $i$, let $P_i$ denote the permutation matrix which has a one in every entry where the symbol of $\mathcal{L}_1$  is $i$ and let $Q_i$ denote the permutation matrix which has a one in every entry where the symbol of $\mathcal{L}_2$  is $i$. Then $\{\frac{1}{\sqrt{n}} P_i\}_{i=1}^{n}$ and $\{\frac{1}{\sqrt{n}} Q_i\}_{i=1}^{n}$ are orthonormal bases of $A(\mathcal{L}_1)$ and $A(\mathcal{L}_2)$ respectively in the trace inner product.  Note that the trace of $P_i\circ Q_j$ is the number of times the symbol of an entry of $\mathcal{L}_1$  is $i$ and the symbol of the corresponding entry of $\mathcal{L}_2$ is $j$.  If we call the set of such entries $S_{i,j}$, then 
\[
Q(A(\mathcal{L}_1), A(\mathcal{L}_2) )=n^{-2}\sum_{i,j=1}^n \vert S_{i,j}\vert^2 .  
\]
Since for any two Latin squares we have $\sum_{i,j=1}^n \vert S_{i,j}\vert=n^2$, it follows from Jensen's inequality that 
\[
1=(n^{-2}\sum_{i,j=1}^n \vert S_{i,j}\vert)^2\le n^{-2}\sum_{i,j=1}^n \vert S_{i,j}\vert^2 , 
\]
with equality if and only if $|S_{ij}|=1$ for all $i,j$, which is equivalent to $\mathcal{L}_1$ and $\mathcal{L}_2$ being orthogonal.
\end{proof}

\begin{rem} 
{\rm 
The measure of orthogonality between $*$-algebras thus gives us a measure of orthogonality between two Latin squares which is interesting in its own right, and invites further investigation.  An obvious question is how small $Q(A(\mathcal{L}_1), A(\mathcal{L}_2) )$ can be without being one. This question would be particularly interesting for six by six Latin squares since no two six by six Latin squares can be orthogonal.
}
\end{rem} 

We note that while $A(\mathcal{L})$ is closed under Schur multiplication, it is not closed in general under matrix multiplication. In fact, it will only be closed under matrix multiplication if the permutation matrices that span $A(\mathcal{L})$ form a finite group $G$.  In this case,  $A(\mathcal{L})$ is also an algebra under matrix multiplication and is isomorphic to $\mathbb{C}[G]$, the group algebra of all (complex) linear combinations of elements of $G$.  We examine this case next.

\begin{example}
{\rm 
	Let $G$ and $H$ be a finite abelian group of order $n$. From standard representation theory, the group algebras $\mathbb{C}[G]$ and $\mathbb{C}[H]$ of $G$ and $H$ over the complex numbers are finite-dimensional, unital, commutative $*$-algebras, and furthermore, $\mathbb{C}[G]\cong U\left(\bigoplus_{k=1}^{n}\mathbb{C}\right) U^*$ and $\mathbb{C}[H]\cong V\left(\bigoplus_{k=1}^{n}\mathbb{C}\right) V^*$ where $U$ and $V$ are the character tables of $G$ and $H$, respectively. 

We note that $\mathbb{C}[G]$ and $\mathbb{C}[H]$ both contain the multiplicative identity, as well as the rank one projection onto the trivial character subspace. Since their intersection is a subspace of dimension greater than one, they cannot be quasiorthogonal.

    We can also prove that $\mathbb{C}[G]$ and $\mathbb{C}[H]$ are not quasiorthogonal via the machinery of quasiability:
	Since $G$ and $H$ are finite abelian groups, $G\cong \mathbb{Z}_{p_1}\times \cdots \times \mathbb{Z}_{p_r}$ and $H\cong \mathbb{Z}_{q_1}\times \cdots \times \mathbb{Z}_{q_s}$ for some $r$ and $s$, where $p_1,\dots,p_r,q_1,\dots,q_s$ are prime powers and $p_1+\cdots +p_r = q_1+\cdots +q_s= n$. Let $F_m$ be the $m\times m$ matrix given by $$F_{m} = \frac{1}{\sqrt{m}}\begin{bmatrix}
	\left(e^{\frac{-2\pi i}{m}}\right)^{jk}
	\end{bmatrix}_{j,k = 0,\dots,m-1}.$$
	Then $F_m$ is the character table of $\mathbb{Z}_m$. Moreover,
	$$U = \bigotimes_{k = 1}^r F_{p_k}\quad\text{and}\quad V = \bigotimes_{k = 1}^s F_{q_k}.$$
	Since $U$ and $V$ both have a row and a column consisting of all ones, $U^*V$ has a row with a single one and zeros elsewhere. Hence, $(U^*V)\circ(\overline{U^*V})$ is not quasiable with respect to $(1,\dots,1)$ and $(1,\dots,1)$. Therefore, $\mathbb{C}[G]$ and $\mathbb{C}[H]$ are not quasiorthogonal.

    
}
\end{example}

\section{From Commutative to General (Non-Commutative) $\ast$-Algebras}\label{sec:nc}

In this section, we show how the techniques and tools we developed above in our consideration of quasiorthogonality for commutative algebras, can be used as an ingredient to investigate the notion for general (non-commutative) algebras. 

We first connect the investigation with a notion from operator algebras \cite{rajeshthesis,pereira2006representing}. Recall that a vector $v$ is a
\emph{separating vector} for an algebra $\mathcal A$ if $Av=0$ (the zero vector) for some $A\in \mathcal A$ implies $A=0$ (the zero operator).

\begin{prop}\label{thm:QOsepvec}
    Let $\mathcal A, \mathcal B$ be unital $\ast$-subalgebras of $M_n$. If $\mathcal A$ has a rank-1 projection and $\mathcal B$ is quasiorthogonal to $\mathcal A$, then $\mathcal B$ must have a separating vector. 
\end{prop}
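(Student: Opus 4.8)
The plan is to unwind the hypotheses into concrete matrix data and then exhibit a separating vector directly. Suppose $\mathcal{A}$ contains a rank-$1$ projection $P = \kb{\psi}{\psi}$ for some unit vector $\psi \in \mathbb{C}^n$. Since $\mathcal{B}$ is quasiorthogonal to $\mathcal{A}$, condition~\ref{3} of Definition~\ref{q.o} gives $\mathcal{E}_{\mathcal{B}}(P) = \frac{\Tr(P)}{n} I_n = \frac{1}{n} I_n$. My claim is that $\psi$ itself is a separating vector for $\mathcal{B}$. To see why this is plausible: the condition $\mathcal{E}_{\mathcal{B}}(P) = \frac{1}{n}I_n$ says that, relative to the block decomposition $\mathcal{B} \cong \bigoplus_k (M_{n_k} \otimes I_{m_k})$, the "compression" of $P$ into each block summand is a nonzero scalar multiple of the identity on that summand — in particular it is nonzero in every block. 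A vector fails to be separating for $\mathcal{B}$ exactly when, in some block, its projection onto the relevant tensor factor is deficient; so the uniformity forced by quasiorthogonality should rule this out.

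The key steps I would carry out are as follows. First, fix a unitary $W$ so that $W^*\mathcal{B}W = \bigoplus_{k=1}^d (M_{n_k}\otimes I_{m_k})$, and replace $\psi$ by $W^*\psi$, $P$ by $W^*PW$; this is harmless since separating vectors transform covariantly under unitary conjugation. Second, write down the conditional expectation $\mathcal{E}_{\mathcal{B}}$ explicitly in this block form: for $X$ written in block coordinates, $\mathcal{E}_{\mathcal{B}}(X)$ acts in the $k$-th summand as $\frac{1}{m_k}(\mathrm{id}_{M_{n_k}} \otimes \Tr_{m_k})$ applied to the corresponding block of $X$, which is the standard formula for the trace-preserving conditional expectation onto $M_{n_k}\otimes I_{m_k}$. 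Third, apply this to $P = \kb{\psi}{\psi}$ and use $\mathcal{E}_{\mathcal{B}}(P) = \frac1n I_n$ to deduce that, writing $\psi = \bigoplus_k \psi_k$ with $\psi_k \in \mathbb{C}^{n_k}\otimes\mathbb{C}^{m_k}$, the partial trace $\Tr_{m_k}(\kb{\psi_k}{\psi_k})$ equals $\frac{m_k}{n} I_{n_k}$ for each $k$; in particular this reduced operator on $\mathbb{C}^{n_k}$ is positive definite, hence of full rank $n_k$, for every $k$. Fourth, invoke the standard fact about separating vectors for $\bigoplus_k (M_{n_k}\otimes I_{m_k})$: a vector $\bigoplus_k \psi_k$ is separating for this algebra if and only if each $\Tr_{m_k}(\kb{\psi_k}{\psi_k})$ has full rank $n_k$ (equivalently, each $\psi_k$, viewed as an $n_k\times m_k$ matrix, has rank $n_k$, which forces $n_k \le m_k$ along the way but we do not need that explicitly). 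Combining the third and fourth steps yields that $\psi$ is separating for $\mathcal{B}$, completing the proof.

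The main obstacle, and the step deserving the most care, is the third step — extracting the partial-trace identity cleanly and making sure the normalization is right — together with cleanly stating and justifying the "full rank partial trace $\iff$ separating vector" equivalence of the fourth step. The latter is where one genuinely uses the tensor structure: an operator $A = \bigoplus_k (A_k \otimes I_{m_k})$ in $\mathcal{B}$ kills $\psi$ iff $A_k$ kills every column of the matrix unfolding of $\psi_k$, i.e. iff $A_k$ vanishes on the column space of $\psi_k$; and that column space is all of $\mathbb{C}^{n_k}$ exactly when $\psi_k$ has rank $n_k$, which in turn is equivalent to $\Tr_{m_k}(\kb{\psi_k}{\psi_k}) = \psi_k \psi_k^*$ (in the matrix-unfolding picture) being of full rank. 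I would present this equivalence as a short lemma or as an inline observation, being careful with the distinction between $\psi_k$ as a vector in a tensor product and as an $n_k \times m_k$ matrix. Everything else is bookkeeping, and the hypothesis that $\mathcal{A}$ has a \emph{rank-one} projection enters only to guarantee $\Tr(P)=1$, giving the crucial strict positivity $\frac{m_k}{n}I_{n_k} \succ 0$ in every block.
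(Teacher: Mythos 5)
Your proof is correct, and the vector you exhibit ($\psi$ with $P=\kb{\psi}{\psi}$) is the same separating vector the paper finds; but your route to verifying that it separates $\mathcal B$ is genuinely different and considerably heavier. The paper never touches the block decomposition of $\mathcal B$: it simply notes that quasiorthogonality gives $\psi^* B\psi = \Tr(B)/n$ for all $B\in\mathcal B$, applies this to $B = N^*N$ to get $\|N\psi\|^2 = \psi^*N^*N\psi = \Tr(N^*N)/n$, and observes that the right-hand side is strictly positive whenever $N\neq 0$. That is the whole argument --- three lines, no structure theory, no conditional-expectation formula, no characterization of separating vectors. Your version instead conjugates $\mathcal B$ into the standard form $\bigoplus_k(M_{n_k}\otimes I_{m_k})$, writes $\mathcal E_{\mathcal B}$ explicitly as a blockwise normalized partial trace, extracts $\Tr_{m_k}(\kb{\psi_k}{\psi_k}) = \frac{m_k}{n}I_{n_k}$, and then invokes the rank characterization of separating vectors for algebras in standard form. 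All of these steps are sound (and your handling of the vector-versus-matrix unfolding of $\psi_k$ is the right care to take), so the proof goes through. What your approach buys is the explicit structural picture: you see directly that each $\psi_k$ has full rank $n_k$ as an $n_k\times m_k$ matrix, which forces $n_k\le m_k$ and thus recovers, as a byproduct, exactly the multiplicity condition the paper discusses in the remark following the proposition. What it costs is that you must separately justify the conditional-expectation formula and the separating-vector criterion, neither of which the direct argument needs. One small caveat on your closing sentence: the rank-one hypothesis does more than guarantee $\Tr(P)=1$ --- it is what lets you write $P=\kb{\psi}{\psi}$ for a \emph{single} vector in the first place, so that the full-rank conclusion in each block is a statement about one vector rather than about a sum of outer products; with a rank-two projection the same positivity of $\mathcal E_{\mathcal B}(P)$ would not yield a separating vector.
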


\begin{proof}
If $\mathcal A$ has a rank-1 projection $P\in \mathcal A$, then we can write $P=vv^*$ for some vector $v\in \mathbb C^n$. Now, if $\mathcal A$ and $\mathcal B$ are quasiorthogonal, then item (2) of Definition~\ref{q.o} reduces, for all $B\in \mathcal B$, to 
\[
v^*Bv= \Tr\big((vv^*\big) B) = \Tr(PB) =  \frac{\Tr(P)\Tr(B)}{n} = \frac{\Tr(B)}{n} . 
\]
So for every positive semidefinite element $M\in \mathcal B$, which we can write as $M=N^*N$ for some $N\in \mathcal B$, item (2) can further be written as $v^*N^*Nv=\Tr(N^*N)/n$, where the right-hand side of this equality is strictly positive if and only if $N$ is not the zero matrix. In particular, $Nv$ cannot be the zero vector for any non-zero element $N\in \mathcal B$. In other words, $v$ is a separating vector for $\mathcal B$, and this completes the proof. 
\end{proof}

\begin{rem}
{\rm 
Proposition~\ref{thm:QOsepvec} highlights an immediate difference confronted when one moves from the commutative to the non-commutative setting. In particular, it was proved in \cite{rajeshthesis,pereira2006representing} that a unital $*$-subalgebra $\mathcal A$ of ${M}_n$ has a separating vector if and only if when $\mathcal A$ is viewed in its unitary equivalence form 
$\mathcal A= U\left(\oplus_{i=1}^q \left({M}_{m_k}\otimes I_{n_k}\right)\right)U^* , 
$
for some unitary $U$, 
we have $m_k\le n_k$ for all $k$ (and $\sum_{k=1}^q m_kn_k=n$).

Clearly, then, many non-commutative algebras (which is any algebra with some $m_k >1$) do not have separating vectors. And by the result above, any such algebra cannot be quasiorthogonal to any algebra that includes a rank-1 projection amongst its elements. This is in contrast to the commutative setting, where for any unital commutative $\ast$-algebra $\mathcal A$, it is not hard to show that one can always find an algebra $\mathcal B$ with rank-1 projections for which $\mathcal A$ is quasiorthogonal to it.  

This observation is perhaps unsurprising given the direct link between quasiorthogonality and quantum privacy exhibited elsewhere \cite{kribs2021approximate}. On this issue specifically, we note private quantum channels and their connection with separating vectors (or equivalently, their connection with trace vectors) was explored in \cite{church}. The connection between quasiorthogonality and quantum privacy is discussed further in Section~\ref{sec:privacy}. 
}
\end{rem}

Let $\mathcal A$ and $\mathcal B$ be unital $*$-subalgebras of $M_n$. In the non-commutative setting, the general representation theory states that there exist unitary matrices $U$ and $V$ such that 
	\begin{align}\label{eq:generalcase}
	\mathcal{A} =  U\left(\oplus_{k=1}^{d_1} \left({M}_{a_k}\otimes I_{m_k}\right)\right)U^*,\quad\quad\mathcal{B} =  V\left(\oplus_{k=1}^{d_2} \left({M}_{b_k}\otimes I_{n_k}\right)\right)V^*
	\end{align}
where $\sum_{k=1}^{d_1} a_km_k=n$ and $\sum_{k=1}^{d_2} b_kn_k=n$, and non-commutativity here simply means some $a_k>1$ and some $b_k>1$. 

Based on the analysis of the previous section, we will construct particular orthonormal bases for $\mathcal{A}$ and $\mathcal{B}$ as follows. 
Consider the partition of columns of $U$ into $U_1,\dots,U_{d_1}$ so that for $k=1,\dots,d_1$, the matrix $U_k$ has $a_km_k$  columns, taken in consecutive order. Similarly, partition columns of $V$ into  $V_1,\dots,V_{d_2}$ so that for $k=1,\dots,d_2$, the matrix $V_k$ has $b_kn_k$  columns, again taken in the natural consecutive order. Next, partition $U_k$ (respectively $V_k$) into $U_{k,1},\dots,U_{k,a_k}$ (respectively $V_{k,1},\dots,V_{k,b_k}$) such that each cell contains $m_k$ (respectively $n_k$)  columns, taken in consecutive order. 

Given $1\leq k\leq d_1$ and $1\leq \ell\leq d_2$,  define 
\begin{align*}
&P(k,p_k,q_k) = U_{k,p_k}U_{k,q_k}^*  \quad\quad\text{ $1\leq p_k, q_k\leq a_k$,}\\
&R(\ell,r_\ell,s_\ell) = V_{\ell,r_\ell}V_{\ell,s_\ell}^*  \quad\quad\quad\quad \text{$1\leq r_\ell, s_k\leq b_\ell$}.
\end{align*}
Then $\|P(k,p_k,q_k)\|^2 = m_k$ and $\|R(\ell,r_\ell,s_\ell)\|^2 = n_\ell$. Generalizing the argument from the commutative algebra case, it follows that 
	\begin{align}\label{eq:general bases}
	\left\{\frac{1}{\sqrt{m_k}}P(k,p_k,q_k) \mid 1\leq k\leq d_1,1\leq p_k, q_k\leq a_k\right\}\quad\text{and}\quad \left\{\frac{1}{\sqrt{n_\ell}}R(\ell,p_\ell,q_\ell) \mid 1\leq \ell\leq d_2,1\leq r_\ell, s_\ell\leq b_\ell\right\}
	\end{align}
are orthonormal bases for $\mathcal{A}$ and $\mathcal{B}$, respectively. We note that $P(k,p_k,q_k)$ is nilpotent for $p_k\neq q_k$ and $R(\ell,p_\ell,q_\ell)$ is nilpotent for $p_\ell\neq q_\ell$. 

We provide a simple example of the construction of an orthonormal basis for a  unital $*$-subalgebra of $M_8$ based on this partitioning below. 

\begin{example}\label{ex:hadamard 2}
{\rm 
	Consider $$\mathcal{A} =  U\left(\oplus_{k=1}^{2} \left({M}_{2}\otimes I_{2}\right)\right)U^* ,$$
	where $$U = \left[\begin{array}{c|c!{\vrule width 1.5pt}c|c}
	U_{1,1} & U_{1,2} & U_{2,1} & U_{2,2}
	\end{array}\right] = \frac{1}{\sqrt{8}}\left[\begin{array}{cc|cc!{\vrule width 1.5pt}cc|cc}
	1 & 1 & 1 & 1 & 1 & 1 & 1 & 1\\
	1 & -1 & 1 & -1 & 1 & -1 & 1 & -1\\
	1 & 1 & -1 & -1 & 1 & 1 & -1 & -1\\
	1 & -1 & -1 & 1 & 1 & -1 & -1 & 1\\
	1 & 1 & 1 & 1 & -1 & -1 & -1 & -1\\
	1 & -1 & 1 & -1 & -1 & 1 & -1 & 1\\
	1 & 1 & -1 & -1 & -1 & -1 & 1 & 1\\
	1 & -1 & -1 & 1 & -1 & 1 & 1 & -1\\
	\end{array}\right].$$
	Then, the following is an orthonormal basis for $\mathcal{A}$: 
	$$\left\{\frac{1}{\sqrt{2}}U_{1,1}U_{1,1}^*,\frac{1}{\sqrt{2}}U_{1,1}U_{1,2}^*,\frac{1}{\sqrt{2}}U_{1,2}U_{1,1}^*,\frac{1}{\sqrt{2}}U_{1,2}U_{1,2}^*,\frac{1}{\sqrt{2}}U_{2,1}U_{2,1}^*,\frac{1}{\sqrt{2}}U_{2,1}U_{2,2}^*,\frac{1}{\sqrt{2}}U_{2,2}U_{2,1}^*,\frac{1}{\sqrt{2}}U_{2,2}U_{2,2}^*\right\}.$$
    }
\end{example}

In order to give a formula for the measure of orthogonality $Q (\mathcal{A},\mathcal{B})$ between two unital $*$-subalgebras $\mathcal A$ and $\mathcal B$ in a manner that builds on the commutative case, we 
partition  $U^*V$ and analyse its submatrices. To this end, let \begin{align}\label{eq:partitions r and c}
	r = (m_1\mathbf{1}_{a_1},m_2\mathbf{1}_{a_2},\dots, m_{d_1}\mathbf{1}_{a_{d_1}})\quad\text{and}\quad c = (n_1\mathbf{1}_{b_1},n_2\mathbf{1}_{b_2},\dots, n_{d_2}\mathbf{1}_{b_{d_2}}).
\end{align}
Consider the matrix $U^*V$ by partitioning its rows according to $r$ and its columns according to $c$ as follows:
\begin{align*}
\begingroup
\renewcommand{\arraystretch}{1.5}
U^*V = \left[\begin{array}{c|c|c!{\vrule width 1.5pt}c|c|c!{\vrule width 1.5pt}c!{\vrule width 1.5pt}c|c|c} U_{1,1}^*V_{1,1} & \cdots & U_{1,1}^*V_{1,b_1} & U_{1,1}^*V_{2,1} & \cdots & U_{1,1}^*V_{2,b_2} & \cdots & U_{1,1}^*V_{d_2,1} & \cdots & U_{1,1}^*V_{d_2,b_{d_2}}\\\hline 
\vdots & \ddots & \vdots & \vdots & \ddots & \vdots & \cdots& \vdots & \ddots & \vdots\\\hline 
U_{1,a_1}^*V_{1,1} & \cdots & U_{1,a_1}^*V_{1,b_1} & U_{1,a_1}^*V_{2,1} & \cdots & U_{1,a_1}^*V_{2,b_2} & \cdots & U_{1,a_1}^*V_{d_2,1} & \cdots & U_{1,a_1}^*V_{d_2,b_{d_2}} \\\noalign{\hrule height 1.5pt} U_{2,1}^*V_{1,1} & \cdots & U_{2,1}^*V_{1,b_1} & U_{2,1}^*V_{2,1} & \cdots & U_{2,1}^*V_{2,b_2} & \cdots & U_{2,1}^*V_{d_2,1} & \cdots & U_{2,1}^*V_{d_2,b_{d_2}} \\\hline
\vdots & \ddots & \vdots & \vdots & \ddots & \vdots & \cdots& \vdots & \ddots & \vdots\\\hline
U_{2,a_1}^*V_{1,1} & \cdots & U_{2,a_1}^*V_{1,b_1} & U_{2,a_1}^*V_{2,1} & \cdots & U_{2,a_1}^*V_{2,b_2} & \cdots & U_{2,a_1}^*V_{d_2,1} & \cdots & U_{2,a_1}^*V_{d_2,b_{d_2}} \\\noalign{\hrule height 1.5pt} \vdots & \ddots & \vdots & \vdots & \ddots & \vdots & \cdots& \vdots & \ddots & \vdots\\ \end{array}\right].
\endgroup
\end{align*}
For $1 \leq k \leq d_1$ and $1\leq \ell \leq d_2$, let $\Gamma_{k,\ell}$ be the multi-set (i.e., elements can be repeated) of submatrices $\{ U_{k,p}^* V_{\ell,q} \, : \, 1\leq p\leq a_k, \,\, 1\leq q\leq b_\ell \}$. That is, $\Gamma_{k,\ell}$ is the collection of submatrices within the bold lines pictured above. We then define 
\[
\gamma_{k,\ell} \equiv  \, \mbox{the sum of the squared trace inner product of all distinct pairs of matrices in}\, \Gamma_{k,\ell}. 
\]
The explicit formula for $\gamma_{k,\ell}$ is given in the proof below. 

The following result extends Theorem~\ref{thm:Fnorm} to the general (i.e., including non-commutative) algebra case. 

\begin{theorem}\label{thm:general quasiorthogonality}
	 Let $\mathcal{A},\mathcal{B}\subseteq M_n$ be unital $*$-algebras. Let $U,V$ be the unitary matrices for $\mathcal A$, $\mathcal B$ given in equation~\eqref{eq:generalcase} and let $X=X(U,V)$ be their block-index matrix. Define $r$ and $c$ as in equation~\eqref{eq:partitions r and c}. Then
	 \begin{align}\label{eq:q-o for general}
	 Q (\mathcal{A},\mathcal{B}) =  \|Y(X;r,c)\|_F^2 + \sum_{k=1}^{d_1}\sum_{\ell=1}^{d_2}\frac{1}{m_kn_\ell}\gamma_{k,\ell}.
	 \end{align}     
	 This implies that $\mathcal{A}$ and $\mathcal{B}$ are quasiorthogonal if and only if the following hold:	
	 \begin{enumerate}
	 	\item[(i)] $X$ is quasiable with respect to $r$ and $c$.
	 	\item[(ii)] For all $1 \leq k \leq d_1$ and $1\leq \ell \leq d_2$, the multi-set of matrices  $\Gamma_{k,\ell}$ forms an orthogonal set in the trace inner product.
	 \end{enumerate}
\end{theorem}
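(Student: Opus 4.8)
The strategy is to mimic the computation in the proof of Theorem~\ref{thm:Fnorm}, using the explicit orthonormal bases for $\mathcal A$ and $\mathcal B$ recorded in equation~\eqref{eq:general bases}, and then to group the resulting sum of squared trace inner products into a ``diagonal'' part (matching the commutative contribution) and an ``off-diagonal'' part (the genuinely non-commutative contribution encoded by the $\gamma_{k,\ell}$). Concretely, I would expand
\[
Q(\mathcal A,\mathcal B) = \sum \Bigl| \tfrac{1}{\sqrt{m_k n_\ell}}\Tr\bigl(P(k,p_k,q_k) R(\ell,r_\ell,s_\ell)^*\bigr)\Bigr|^2,
\]
where the sum runs over all basis indices, and use $P(k,p_k,q_k)=U_{k,p_k}U_{k,q_k}^*$ and $R(\ell,r_\ell,s_\ell)=V_{\ell,r_\ell}V_{\ell,s_\ell}^*$ together with cyclicity of the trace to rewrite each summand in terms of the submatrices $U_{k,p}^*V_{\ell,q}$ appearing in the partitioned $U^*V$ displayed above.

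The key algebraic step is the identity
\[
\Tr\bigl(P(k,p_k,q_k) R(\ell,r_\ell,s_\ell)^*\bigr) = \Tr\bigl(U_{k,q_k}^* V_{\ell,s_\ell} (U_{k,p_k}^* V_{\ell,r_\ell})^*\bigr) = \bigl\langle U_{k,p_k}^* V_{\ell,r_\ell}, \, U_{k,q_k}^* V_{\ell,s_\ell}\bigr\rangle,
\]
obtained by cycling the trace; here the inner product is the trace inner product on the relevant rectangular-matrix space. Thus $Q(\mathcal A,\mathcal B)$ becomes a sum over $k,\ell$ of $\frac{1}{m_k n_\ell}$ times a sum over quadruples $(p_k,q_k,r_\ell,s_\ell)$ of $|\langle U_{k,p_k}^*V_{\ell,r_\ell},\,U_{k,q_k}^*V_{\ell,s_\ell}\rangle|^2$. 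I would then split each inner sum according to whether the pair of matrices $(U_{k,p_k}^*V_{\ell,r_\ell}, U_{k,q_k}^*V_{\ell,s_\ell})$ is ``equal'' (same index pair, i.e.\ $p_k=q_k$ and $r_\ell=s_\ell$) or ``distinct''. The equal terms contribute $\sum_{p_k,r_\ell}\|U_{k,p_k}^*V_{\ell,r_\ell}\|_F^4$; summing $\|U_{k,p}^*V_{\ell,q}\|_F^2$ over $p,q$ gives the total sum of the entries of $X$ in the $(k,\ell)$-block, and one checks that after dividing by $m_kn_\ell$ and summing over $k,\ell$ this diagonal part is exactly $\|Y(X;r,c)\|_F^2$ — this reuses precisely the bookkeeping of Theorem~\ref{thm:Fnorm}, since for fixed $k,\ell$ the blocks $U_{k,p}^*V_{\ell,q}$, $p=1,\dots,a_k$, $q=1,\dots,b_\ell$, are all equal as doubly-stochastic-weighted objects when $\mathcal A,\mathcal B$ were commutative; in the general case one must be slightly careful that the $a_k b_\ell$ sub-blocks within $\Gamma_{k,\ell}$ all carry the same Frobenius mass as dictated by the block structure of $U^*V$. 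The remaining (distinct-pair) terms are by definition $\sum_{k,\ell}\frac{1}{m_kn_\ell}\gamma_{k,\ell}$, provided one sets up $\gamma_{k,\ell}$ to count ordered distinct pairs (or unordered pairs with a factor of $2$, matched to the phrasing ``all distinct pairs''); I would state the explicit formula $\gamma_{k,\ell} = \sum_{(p,r)\neq(p',r')} |\langle U_{k,p}^*V_{\ell,r}, U_{k,p'}^*V_{\ell,r'}\rangle|^2$ in the proof, exactly as promised in the text.

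For the ``if and only if'' characterization of quasiorthogonality: recall $Q(\mathcal A,\mathcal B)\ge 1$ always, with equality iff the algebras are quasiorthogonal. The first summand $\|Y(X;r,c)\|_F^2$ is itself $\ge 1$ by Theorem~\ref{thm:commutative and quasi-ortho}'s Cauchy--Schwarz argument (applied to $Y,Z$ with $Z=[\sqrt{m_kn_\ell}]$), with equality iff $X$ is quasiable with respect to $r,c$; and the second summand $\sum_{k,\ell}\frac{1}{m_kn_\ell}\gamma_{k,\ell}$ is a sum of nonnegative terms, vanishing iff every $\gamma_{k,\ell}=0$, i.e.\ iff each multi-set $\Gamma_{k,\ell}$ is orthogonal in the trace inner product. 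Hence $Q(\mathcal A,\mathcal B)=1$ forces both summands to hit their minima simultaneously, giving (i) and (ii); conversely (i) and (ii) give $Q(\mathcal A,\mathcal B)=1+0=1$.

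\textbf{Main obstacle.} The routine trace manipulations are straightforward; the subtle point is the correct combinatorial identification of the diagonal part with $\|Y(X;r,c)\|_F^2$. One must verify that, within a fixed $(k,\ell)$ block of the partitioned $U^*V$, the squared Frobenius norms of the sub-blocks $U_{k,p}^*V_{\ell,q}$ sum (over all $a_kb_\ell$ of them) to the total mass of the corresponding block of $X$, and that the partition data $r,c$ in equation~\eqref{eq:partitions r and c} — which repeats each $m_k$ exactly $a_k$ times and each $n_\ell$ exactly $b_\ell$ times — is precisely what makes $Y(X;r,c)$ collapse these repeated rows/columns correctly. Getting the normalization constants $\frac{1}{m_kn_\ell}$ versus $\frac{1}{\sqrt{m_kn_\ell}}$ to line up between the two pieces, and being consistent about ordered-vs-unordered distinct pairs in the definition of $\gamma_{k,\ell}$, is where care is needed; everything else follows the commutative template.
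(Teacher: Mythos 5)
Your proposal is correct and follows essentially the same route as the paper: expand $Q(\mathcal A,\mathcal B)$ in the bases of equation~\eqref{eq:general bases}, cycle the trace to express each term via the sub-blocks $U_{k,p}^*V_{\ell,q}$ of $U^*V$, split the sum into the diagonal part (which equals $\|Y(X;r,c)\|_F^2$ because the partitions in equation~\eqref{eq:partitions r and c} make each sub-block's mass a single entry of $Y$) and the off-diagonal part $\sum_{k,\ell}\frac{1}{m_k n_\ell}\gamma_{k,\ell}$, then read off the equality case from the two separate lower bounds. One minor point: no claim that the $a_k b_\ell$ sub-blocks within a fixed $\Gamma_{k,\ell}$ carry equal Frobenius mass is needed (nor is it true in general); the identification with $\|Y(X;r,c)\|_F^2$ is entry-by-entry.
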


\begin{proof}
	Using the form for $Q (\mathcal{A},\mathcal{B})$ discussed above (from \cite[Corollary~1]{kribs2021approximate}), and recalling the orthonormal bases for $\mathcal A,\mathcal B$ constructed in equation~\eqref{eq:general bases}, we calculate: 
	\begin{align*}\nonumber
	Q (\mathcal{A},\mathcal{B}) 
	=&\; \sum_{\substack{1\leq k\leq d_1\\ 1\leq \ell\leq d_2}}\sum_{\substack{1\leq p_k, q_k\leq a_k\\1\leq r_\ell, s_\ell \leq b_\ell}} \left| \frac{1}{\sqrt{m_kn_\ell}}\mathrm{Tr}\left(P(k,p_k,q_k)R(\ell,r_\ell,s_\ell)\right)\right|^2\\\nonumber
	=&\;\sum_{\substack{1\leq k\leq d_1\\ 1\leq \ell\leq d_2}}\sum_{\substack{1\leq p_k, q_k\leq a_k\\1\leq r_\ell, s_\ell\leq b_\ell}} \left| \frac{1}{\sqrt{m_kn_\ell}}\mathrm{Tr}\left(\left(U_{k,p_k}^*V_{\ell,s_\ell}\right)^*\;
	\left(U_{k,q_k}^*V_{\ell,r_\ell}\right) \right)\right|^2\\\nonumber
	=& \sum_{\substack{1\leq k\leq d_1\\ 1\leq \ell\leq d_2}}\sum_{\substack{1\leq p_k\leq a_k\\1\leq r_\ell\leq b_\ell}} \left| \frac{1}{\sqrt{m_kn_\ell}}\mathrm{Tr}\left(\left(U_{k,p_k}^*V_{\ell,r_\ell}\right)^*\;
	\left(U_{k,p_k}^*V_{\ell,r_\ell}\right) \right)\right|^2\\\nonumber
	&\quad+\sum_{\substack{1\leq k\leq d_1\\ 1\leq \ell\leq d_2}}\sum_{\substack{p_k\ne q_k \;\text{or}\;\\ r_\ell\ne s_\ell}} \left| \frac{1}{\sqrt{m_kn_\ell}}\mathrm{Tr}\left(\left(U_{k,p_k}^*V_{\ell,s_\ell}\right)^*\;
	\left(U_{k,q_k}^*V_{\ell,r_\ell}\right) \right)\right|^2\\
	=&\;  \|Y(X;r,c)\|_F^2 +\sum_{\substack{1\leq k\leq d_1\\ 1\leq \ell\leq d_2}}\frac{1}{m_kn_\ell}\gamma_{k,\ell}, 
	\end{align*}
    and this completes the proof. 
\end{proof}

\begin{rem}
	We note that Theorem~\ref{thm:general quasiorthogonality} can be viewed as a generalization of Theorem~4 of \cite{petz2007complementarity}, specifically for the special case with $d_1 = d_2 = 1$, $a_k = b_k$ and $m_k=n_k$.
\end{rem}

The following is an immediate consequence of the theorem, which gives a test to rule out quasiorthogonality. By the {\it multiplicity} of an element in a multi-set, we simply mean the number of times the element appears in the set. 

\begin{cor}
	Given the hypotheses of Theorem~\ref{thm:general quasiorthogonality}, if there exist $k$ and $\ell$ such that $\Gamma_{k,\ell}$ has an element with multiplicity at least $2$, then $\mathcal{A}$ and $\mathcal{B}$ are not quasiorthogonal.
\end{cor}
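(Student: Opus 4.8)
The plan is to read off the result directly from the formula in Theorem~\ref{thm:general quasiorthogonality}, specifically from the term $\sum_{k,\ell}\frac{1}{m_kn_\ell}\gamma_{k,\ell}$ and the characterization that quasiorthogonality forces condition (ii), namely that each multi-set $\Gamma_{k,\ell}$ must be an orthogonal set in the trace inner product. The key observation is that a multi-set containing a repeated element cannot possibly be an orthogonal set, because a nonzero matrix is never orthogonal to itself, and the matrices in $\Gamma_{k,\ell}$ are nonzero.

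First I would fix $k$ and $\ell$ for which $\Gamma_{k,\ell}=\{U_{k,p}^*V_{\ell,q}\}$ has some element appearing with multiplicity at least $2$; say $U_{k,p}^*V_{\ell,q} = U_{k,p'}^*V_{\ell,q'}$ with $(p,q)\neq(p',q')$. Call this common matrix $M$. Then I would note that $M\neq 0$: indeed $U_{k,p}$ and $V_{\ell,q}$ are submatrices consisting of columns of unitary matrices, hence have full column rank $m_k$ and $n_\ell$ respectively, and the $(1,1)$-block of $U^*V$ restricted appropriately — or more simply, $\Tr(M^*M) = \Tr\big((U_{k,p}^*V_{\ell,q})^*(U_{k,p}^*V_{\ell,q})\big)$, which equals $\|P(k,p,p)R(\ell,q,q)\|$-type quantity. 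In fact the cleanest route is: the diagonal terms $\frac{1}{\sqrt{m_kn_\ell}}\Tr\big((U_{k,p}^*V_{\ell,q})^*(U_{k,p}^*V_{\ell,q})\big) = \frac{1}{\sqrt{m_kn_\ell}}\mathbf 1^T[(U_{k,p}^*V_{\ell,q})\circ\overline{(U_{k,p}^*V_{\ell,q})}]\mathbf 1$ are exactly the entries of $Y(X;r,c)$, which by Corollary~\ref{cor:zero entry} must all be nonzero if $\mathcal A,\mathcal B$ are quasiorthogonal; alternatively, if some $Y$-entry were zero we are already done. So we may assume $M\neq 0$, i.e. $\Tr(M^*M)>0$.

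Now I would invoke Theorem~\ref{thm:general quasiorthogonality} directly: if $\mathcal A$ and $\mathcal B$ were quasiorthogonal, then condition (ii) would require $\Gamma_{k,\ell}$ to be an orthogonal set, meaning all \emph{distinct pairs} of matrices in the multi-set have zero trace inner product. But since $M$ occurs at least twice, the pair consisting of these two copies of $M$ contributes $|\Tr(M^*M)|^2 = \Tr(M^*M)^2 > 0$ to $\gamma_{k,\ell}$, so $\gamma_{k,\ell}>0$, whence $Q(\mathcal A,\mathcal B) \geq \|Y(X;r,c)\|_F^2 + \frac{1}{m_kn_\ell}\gamma_{k,\ell} > \|Y(X;r,c)\|_F^2 \geq 1$. (Here I am using that $\|Y\|_F^2 = Q$ of the commutative ``MASA hull'' case is $\geq 1$, or more simply that the first term is already at least $1$ since it is itself a measure-of-orthogonality-type quantity; if one prefers to avoid that, one can just note $Q(\mathcal A,\mathcal B)$ strictly exceeds its value when the offending pair is removed, and the minimum $1$ is characterized by quasiorthogonality, forcing $\gamma_{k,\ell}=0$, a contradiction.) Either way $Q(\mathcal A,\mathcal B)>1$, so by the discussion following the definition of $Q$ (the minimum $Q=1$ is attained exactly at quasiorthogonality), $\mathcal A$ and $\mathcal B$ are not quasiorthogonal.

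There is no real obstacle here — the statement is an immediate corollary once one recognizes that the multiplicity-$\geq 2$ hypothesis directly violates condition (ii) of the theorem. The only minor care needed is the argument that the repeated matrix $M$ is nonzero, which I would handle via the observation that its squared Frobenius norm equals a diagonal entry of $Y(X;r,c)$ (up to the positive scalar $\sqrt{m_kn_\ell}$), and such an entry vanishing would itself rule out quasiorthogonality by the non-commutative analogue of Corollary~\ref{cor:zero entry}. So the proof is essentially two or three lines invoking Theorem~\ref{thm:general quasiorthogonality}.
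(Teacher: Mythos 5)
Your proof is correct and matches the paper's intent: the paper states this corollary as an immediate consequence of Theorem~\ref{thm:general quasiorthogonality}, and your argument is exactly the expected one (a repeated nonzero element violates condition (ii), forcing $\gamma_{k,\ell}>0$ and hence $Q(\mathcal A,\mathcal B)>1$). Your handling of the edge case where the repeated element is the zero matrix --- observing that a zero block makes the corresponding block sum of $X$ vanish, so condition (i) fails instead --- is a genuine subtlety the paper glosses over, and you resolve it correctly.
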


Let us illustrate this result for a special case, that also allows us to connect with the Hadamard matrix viewpoint discussed previously.  
Let $\mathcal{A},\mathcal{B}\subseteq M_n$ be unital $*$-algebras as in equation~\eqref{eq:generalcase}. If $U^*V$ is a complex Hadamard matrix, we previously observed that when $\mathcal{A}$ and $\mathcal{B}$ are commutative, they are quasiorthogonal regardless of the ranks of the projections generating $\mathcal{A}$ and $\mathcal{B}$. However, when $\mathcal{A}$ and $\mathcal{B}$ are not commutative, the same result does not hold. In particular, the following example shows that $Q(\mathcal{A},\mathcal{B})$ can be dependent on the number of nilpotent matrices in their orthonormal bases.

\begin{example}
{\rm 
	Let $H_0 = \begin{bmatrix}
	1 & 1\\1 & -1
	\end{bmatrix}$, and $H_n = \begin{bmatrix}
	H_{n-1} & H_{n-1} \\ H_{n-1} & -H_{n-1}
	\end{bmatrix}$ for $n\geq 1$. (Note that the matrix in Example~\ref{ex:hadamard 2} is $H_2$.) Consider $$\mathcal{A}_{p,q} =  U\left(\bigoplus_{k=1}^{2^{n+1-p-q}} \left({M}_{2^p}\otimes I_{2^q}\right)\right)U^*\quad\text{and}\quad \mathcal{B}_{p,q} =  \bigoplus_{k=1}^{2^{n+1-p-q}} \left({M}_{2^p}\otimes I_{2^q}\right)$$
	for some $p,q\geq 1$ with $p+q\leq n$, where $U = \frac{1}{\sqrt{2^{n+1}}}H_n$. Since $H_n$ is a Hadamard matrix, $U$ is quasiable. For $1\leq k,\ell\leq 2^{n+1-p-q}$, we see that $\Gamma_{k,\ell}$ consists of $2^{2p}$ matrices each of which is either $\frac{1}{\sqrt{2^{n+1}}}H_{q-1}$ or $-\frac{1}{\sqrt{2^{n+1}}}H_{q-1}$. 
	Since $\Gamma_{k,\ell}$ has an element with multiplicity more than $1$, by the above corollary, $\mathcal{A}_{p,q}$ and $\mathcal{B}_{p,q}$ are not quasiorthogonal. Indeed, 
	$$\gamma_{k,\ell} = \binom{2^{2p}}{2}\frac{2^{4q}}{2^{2(n+1)}}$$ 
	and therefore,
	\[Q(\mathcal{A}_{p,q},\mathcal{B}_{p,q}) = 1+\sum_{k=1}^{2^{n+1-p-q}}\sum_{\ell=1}^{2^{n+1-p-q}}\frac{\gamma_{k,\ell}}{2^{2q}} = 1+\frac{1}{2^{2p}}\binom{2^{2p}}{2}.\]
    }
\end{example}

Finally, we observe how quasiorthogonality of non-commutative algebras depends on the quasiorthogonality of distinguished commutative subalgebras of the algebras. In particular, note that 
in Theorem~\ref{thm:general quasiorthogonality}, letting 
\[
\mathcal{\tilde{A}} =  U\left(\oplus_{k=1}^{d_1} \left(\Delta_{a_k}\otimes I_{m_k}\right)\right)U^* \quad \mathrm{and} \quad  \mathcal{\tilde{B}} =  V\left(\oplus_{k=1}^{d_2} \left(\Delta_{b_k}\otimes I_{n_k}\right)\right)V^* , 
\]
we have $Q(\mathcal{\tilde{A}},\mathcal{\tilde{B}}) = \|Y(X;r,c)\|_F^2$. 
This suggests the quasiorthogonality of $\mathcal{A}$ and $\mathcal{B}$ requires the existence of a pair of MASAs of $\mathcal{A}$ and $\mathcal{B}$ that are quasiorthogonal.

\begin{prop}\label{prop:quasimax}
	Let $\mathcal{A}$ and $\mathcal{B}$ be unital $*$-subalgebras of ${M}_n$.  Then $\mathcal{A}$ and $\mathcal{B}$ are quasiorthogonal if and only if every pair of maximal commutative $*$-subalgebras of $\mathcal{A}$ and $\mathcal{B}$ are quasiorthogonal.
\end{prop}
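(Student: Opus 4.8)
The plan is to prove both directions by reducing to the commutative results already established, principally Theorem~\ref{thm:commutative and quasi-ortho} and the defining property~(2) of quasiorthogonality in Definition~\ref{q.o}. The easy direction is the ``only if'': if $\mathcal A$ and $\mathcal B$ are quasiorthogonal, then for all $A\in\mathcal A$, $B\in\mathcal B$ we have $\Tr(AB)=\Tr(A)\Tr(B)/n$. Any maximal commutative $*$-subalgebra $\mathcal A_0\subseteq\mathcal A$ is in particular a set of elements of $\mathcal A$, and similarly $\mathcal B_0\subseteq\mathcal B$, so the identity $\Tr(A_0B_0)=\Tr(A_0)\Tr(B_0)/n$ holds for all $A_0\in\mathcal A_0$, $B_0\in\mathcal B_0$; hence $\mathcal A_0$ and $\mathcal B_0$ are quasiorthogonal. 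This needs essentially no computation.

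For the ``if'' direction I would argue contrapositively, or equivalently exhibit, whenever $\mathcal A$ and $\mathcal B$ are \emph{not} quasiorthogonal, a pair of maximal commutative subalgebras that are not quasiorthogonal. The natural route is through Theorem~\ref{thm:general quasiorthogonality}: $\mathcal A$ and $\mathcal B$ fail to be quasiorthogonal precisely when (i) the block-index matrix $X=X(U,V)$ is not quasiable with respect to $r$ and $c$ of equation~\eqref{eq:partitions r and c}, or (ii) some multi-set $\Gamma_{k,\ell}$ fails to be orthogonal in the trace inner product. In case~(i), the commutative subalgebras $\tilde{\mathcal A}=U(\oplus_k(\Delta_{a_k}\otimes I_{m_k}))U^*$ and $\tilde{\mathcal B}=V(\oplus_k(\Delta_{b_k}\otimes I_{n_k}))V^*$ are MASAs of $\mathcal A$ and $\mathcal B$, their block-index matrix is exactly the same $X$ with the same partitions $r,c$ (the diagonal subalgebra of $M_{a_k}\otimes I_{m_k}$ has block structure $\oplus\,\mathbb C I_{m_k}$ repeated $a_k$ times), and $Q(\tilde{\mathcal A},\tilde{\mathcal B})=\|Y(X;r,c)\|_F^2>1$; so by Theorem~\ref{thm:commutative and quasi-ortho} they are not quasiorthogonal, and we are done. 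One must check these $\tilde{\mathcal A},\tilde{\mathcal B}$ are genuinely \emph{maximal} commutative in $\mathcal A,\mathcal B$ — this is standard: a MASA of $M_{a_k}\otimes I_{m_k}$ inside that block is $\Delta_{a_k}\otimes I_{m_k}$, and the direct sum of MASAs of the blocks is a MASA of the direct sum.

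The genuine obstacle is case~(ii): the failure of quasiorthogonality may come solely from a non-orthogonal $\Gamma_{k,\ell}$ (two of the off-diagonal ``nilpotent'' submatrices $U_{k,p}^*V_{\ell,q}$ having nonzero trace inner product) while $X$ is perfectly quasiable, and then the particular diagonal MASAs $\tilde{\mathcal A},\tilde{\mathcal B}$ above \emph{are} quasiorthogonal. So one must choose the commutative subalgebras more cleverly. The idea is to conjugate the $M_{a_k}$ factors by suitable unitaries: replace $\tilde{\mathcal A}$ by $U(\oplus_k(W_k\Delta_{a_k}W_k^*\otimes I_{m_k}))U^*$ for a unitary $W_k\in M_{a_k}$ (still a MASA of the block $M_{a_k}\otimes I_{m_k}$, hence of $\mathcal A$), and likewise for $\mathcal B$ with unitaries $W_\ell'\in M_{b_\ell}$. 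Recomputing $Q$ for this pair via Theorem~\ref{thm:Fnorm}, one sees the measure of orthogonality of these MASAs picks up the off-diagonal terms weighted by entries of $W_k$ and $W_\ell'$; since not all the inner products $\langle U_{k,p}^*V_{\ell,s},U_{k,q}^*V_{\ell,r}\rangle$ vanish, a generic choice of $W_k,W_\ell'$ (e.g.\ built from a complex Hadamard matrix, or simply a Zariski-genericity / averaging argument) makes the resulting sum strictly exceed $1$. Making this selection precise — showing that non-orthogonality of \emph{some} $\Gamma_{k,\ell}$ forces non-quasiorthogonality of \emph{some} rotated pair of MASAs — is the heart of the argument; I expect it to rest on an averaging identity (integrating $Q$ over the unitary group on each $M_{a_k}$ factor and observing the average already exceeds $1$ when any $\gamma_{k,\ell}>0$), which then guarantees a specific good choice exists. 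Combining the two cases completes the contrapositive and hence the proposition.
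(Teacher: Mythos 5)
Your ``only if'' direction is fine, and your reduction of the ``if'' direction to the two failure modes of Theorem~\ref{thm:general quasiorthogonality} is a legitimate strategy, with case~(i) correctly handled by the diagonal MASAs $\tilde{\mathcal A},\tilde{\mathcal B}$. But the proposal has a genuine gap exactly where you flag it: case~(ii) is not proved. You assert that when some $\Gamma_{k,\ell}$ fails to be orthogonal one can rotate the MASAs by unitaries $W_k, W_\ell'$ so that the resulting pair has $Q>1$, and you ``expect'' this to follow from an averaging identity over the unitary group, but you neither set up the Haar average of $Q(\tilde{\mathcal A}_W,\tilde{\mathcal B}_{W'})$ nor verify that it exceeds $1$ whenever $Q(\mathcal A,\mathcal B)>1$. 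That computation (a twirling argument showing the averaged conditional-expectation projections have the form $P_{\mathcal Z(\mathcal A)}+\sum_k\frac{1}{a_k+1}P_{A_k^0}$, etc.) is doable but is the entire content of the hard direction, so as written the proof is incomplete.

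The paper avoids all of this with a much more elementary observation: if $A\in\mathcal A$ and $B\in\mathcal B$ are \emph{Hermitian}, then the polynomials in $A$ form a commutative $*$-subalgebra of $\mathcal A$, which is contained in some maximal commutative $*$-subalgebra of $\mathcal A$, and likewise for $B$; the hypothesis then gives $\mathrm{Tr}(AB)=\mathrm{Tr}(A)\mathrm{Tr}(B)/n$ directly for all Hermitian pairs. Writing an arbitrary element as $H+iK$ with $H,K$ Hermitian and using linearity of the trace extends the identity to all of $\mathcal A\times\mathcal B$, which is condition~(2) of Definition~\ref{q.o}. In other words, the key fact is simply that every Hermitian element of a $*$-algebra lies in a MASA of that algebra; once you see this, no case analysis on the block-index matrix or on the sets $\Gamma_{k,\ell}$ is needed. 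I would recommend restructuring your argument around that observation rather than completing the averaging computation.
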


\begin{proof}  Suppose every pair of maximal commutative subalgebras of $\mathcal{A}$ and $\mathcal{B}$ are quasiorthogonal.  Let $A$ and $B$ be Hermitian elements of $\mathcal{A}$ and $\mathcal{B}$ respectively.  Then the set of all polynomials in $A$ and $B$ are commutative $*$-subalgebras of $\mathcal{A}$ and $\mathcal{B}$, and each is contained in a MASA of the algebra.  Hence $\mathrm{Tr}(AB) = \frac{\mathrm{Tr}(A)\mathrm{Tr}(B)}{n}$.
Since every matrix is a linear combination of two Hermitian matrices, it follows from the linearity of the trace that $\mathrm{Tr}(AB) = \frac{\mathrm{Tr}(A)\mathrm{Tr}(B)}{n}$ for all $A\in \mathcal{A}$ and $B\in \mathcal{B}$.  Hence $\mathcal{A}$ and $\mathcal{B}$ are quasiorthogonal.  The converse is immediate.  
\end{proof}

\begin{rem} 
{\rm 
This result shows that quasiorthogonality of algebras ultimately depends on the quasiorthogonality of sets of commutative subalgebras, which is obviously of conceptual interest. However, it is not clear that the result is of much practical use, as it, in a sense, hides the calculations required to determine quasiorthogonality, as elucidated in Theorem~\ref{thm:general quasiorthogonality}. 
}
\end{rem}

The next example illuminates Proposition~\ref{prop:quasimax} in more detail, giving algebras  $\mathcal{A}$ and $\mathcal{B}$ that are not quasiorthogonal, but such that there exists a pair of MASAs that are quasiorthogonal. 

\begin{example}
{\rm 
    Let $W$ be a $4\times 4$ unitary matrix with block decomposition given by: $$ W = \begin{bmatrix}
	W_1 & W_2\\ W_3 & W_4 
	\end{bmatrix}, $$ 
	where $W_1,W_2,W_3,W_4$ are $2\times 2$ matrices. Let $\mathcal{A}$ be the subalgebra $W \left({M}_{2}\otimes I_{2}\right)W^*$ and $\mathcal{B}$ be the subalgebra $\Delta_{2}\otimes I_{2}$. Then
	$$\left\{\frac{1}{\sqrt{2}}\begin{bmatrix}
	I_2 & 0\\ 0 & 0
	\end{bmatrix}, \frac{1}{\sqrt{2}}\begin{bmatrix}
	0 & 0 \\ 0 & I_2
	\end{bmatrix}\right\}$$
	is an orthonormal basis of $\mathcal{B}$. 
	
	A MASA of $\mathcal{A}$ is of form $W \left((U\Delta_2U^*)\otimes I_{2}\right)W^*$ for some $2\times2$ unitary matrix $U$. Suppose that $U$ is given by \begin{align*}
	\begin{bmatrix}
	a & b \\
	-e^{\mathrm{i}\tau}\bar{b} & e^{\mathrm{i}\tau}\bar{a}
	\end{bmatrix},\quad\text{for some $a,b\in\C$ with $|a|^2+|b|^2=1$ and $\tau\in\mathbb{R}$.}
	\end{align*}
	Since $I_2$ and $\begin{bmatrix}
	|a|^2-\frac{1}{2} & -e^{-i\tau}ab \\ -e^{i\tau}\overline{ab} & |b|^2-\frac{1}{2}
	\end{bmatrix}$ form an orthogonal basis of $U\Delta_2U^*$, it follows that the corresponding maximal commutative algebra $\tilde{\mathcal{A}}$ has an orthonormal basis
	$$\left\{\frac{1}{2}I_4, \frac{1}{\sqrt{2}}W\left(\begin{bmatrix}
	|a|^2-\frac{1}{2} & -e^{-i\tau}ab \\ -e^{i\tau}\overline{ab} & |b|^2-\frac{1}{2}
	\end{bmatrix}\otimes I_2\right)W^*\right\}.$$
	Therefore,  \begin{align*}
	&Q(\tilde{\mathcal{A}},\mathcal{B})\\ =&\; 1+\frac{1}{4}\left[\left(|a|^2-\frac{1}{2}\right)\mathrm{Tr}(W_1^*W_1)+\left(|b|^2-\frac{1}{2}\right)\mathrm{Tr}(W_2^*W_2)+\mathrm{Tr}(e^{-i\tau}abW_1^*W_2)+\overline{\mathrm{Tr}(e^{-i\tau}abW_1^*W_2)}\right]^2 \\
	&+\frac{1}{4}\left[\left(|a|^2-\frac{1}{2}\right)\mathrm{Tr}(W_3^*W_3)+\left(|b|^2-\frac{1}{2}\right)\mathrm{Tr}(W_4^*W_4)+\mathrm{Tr}(e^{-i\tau}abW_3^*W_4)+\overline{\mathrm{Tr}(e^{-i\tau}abW_3^*W_4)}\right]^2.
	\end{align*}
	
	Suppose we take $$ W = \begin{bmatrix}
	W_1 & W_2\\ W_3 & W_4
	\end{bmatrix} = \frac{1}{2}\left[\begin{array}{cc|cc}
	1 & 1 & 1 & 1\\
	1 & -1& 1 & -1\\\hline
	1 & 1 & -1 & -1\\
	1 & -1 & -1 & 1
	\end{array}\right].$$
	Then, $$Q(\tilde{\mathcal{A}},\mathcal{B}) = 1+\frac{(e^{-i\tau}ab+\overline{e^{-i\tau}ab})^2}{2}= 1+2\left(\mathrm{Re}(e^{-i\tau}ab)\right)^2\leq 1+2|e^{-i\tau}ab|^2= 1+2|ab|^2\leq 1+\frac{1}{2}.$$
    If $U$ is taken as a permutation matrix, then $Q(\tilde{\mathcal{A}},\mathcal{B}) = 1$; and if $U$ is taken for some $a,b\in\mathbb{C}$ and $\tau\in\mathbb{R}$ with $|a|=|b|=\frac{1}{2}$ and $\mathrm{Re}(e^{-i\tau}ab) = \frac{1}{2}$, then $Q(\tilde{\mathcal{A}},\mathcal{B}) = \frac{3}{2}$.
    }
\end{example}

\section{Mutually Unbiased Measurements and Quasiorthogonality} \label{sec:examp}

In this section, we consider the special case of commutative algebras generated by sets of projections of the same rank and cardinality, a case with important links to quantum information. Specifically, we will consider families of projections $\{ P_a \}_{a=1}^d$ on $\mathbb C^n$ of rank-$k$ with $\sum_a P_a = I_n$ (so $n=dk$ and the projections have mutually orthogonal ranges), and the commutative algebras they generate: 
\[
\mathcal A = \mathrm{span}\, \{ P_a \, : \, 1 \leq a \leq d  \} \cong (\mathbb C I_k)^{(d)}.
\]
We will use the notation $\{ Q_b \}_{b=1}^d$ and $\mathcal B = \mathrm{span}\, \{ Q_b \, : \, 1 \leq b \leq d  \}$ for a second family of such projections and the algebra they generate. 

The special case of rank-1 projections $P_a = \kb{\psi_a}{\psi_a}$ and $Q_b = \kb{\phi_b}{\phi_b}$ (and $d=n$), which from the algebra perspective is the case of MASA's inside $M_n$, is one of the initial motivating cases for considering the notion of quasiorthogonal algebras \cite{petz2007complementarity,weiner2010}. Indeed, the corresponding algebras $\mathcal A$ and $\mathcal B$ are quasiorthogonal if and only if the states $\{\ket{\psi_a}\}_a$ and $\{\ket{\phi_b}\}_b$ form a pair of {\it mutually unbiased bases} (MUB). That is, for all $1 \leq a,b \leq n$, 
\[
\frac1n = | \langle \psi_a \, | \, \phi_b \rangle |^2 = \Tr ( P_a  Q_b )  = \frac1n \Tr ( P_a ) \Tr (  Q_b ) , 
\]
with the trace identities added to note how the notion is linked with quasiorthogonality. 

More generally, when the projections are higher-rank (so $k> 1$), we have $d$-outcome (von Neumann) measurements (i.e., projections with mutually orthogonal ranges that sum to the identity operator) and their associated algebras. For such families of measurements, the mutually unbiased condition has recently been generalized and investigated \cite{farkas2023MUMs,tavakoli2021mutually}, formally defined as follows.
We also note other related work \cite{kalev2014mutually, kribs2025generalized, pittenger2004, wootters1989optimal}

\begin{definition}\label{def:MUM}
Two $d$-outcome measurements $\mathcal P = \{P_a\}_{a=1}^d$ and $\mathcal Q = \{Q_b\}_{b=1}^d$ acting on $\mathbb C^n$ are called {\rm mutually unbiased measurements (MUMs)} if
\begin{equation}\label{mumeqn}
P_aQ_bP_a = \frac1d P_a \quad \text{ and } \quad  Q_bP_aQ_b = \frac1d Q_b,
\end{equation}
for all $1\leq a,b \leq d$.   
\end{definition}

For the case of rank-1 projections, equation~(\ref{mumeqn}) is simply a restatement of the MUB condition as an operator relation. Given the observation above for MUBs, it is natural to ask if we can use quasiorthogonality to characterize MUMs? As we show in the following result, the answer to this question is: not quite. 

\begin{theorem}\label{quasimums}
    Let $d,k\geq 1$ be fixed positive integers and let $n=dk$. Suppose that $\mathcal P = \{P_a\}_{a=1}^d$ and $\mathcal Q = \{Q_b\}_{b=1}^d$ are families of rank-$k$ projections defining $d$-outcome measurements on $\mathbb C^n$.  Then the following conditions are equivalent: 
    \begin{itemize}
        \item [$(i)$] $\mathcal P = \{P_a\}_{a=1}^d$ and $\mathcal Q = \{Q_b\}_{b=1}^d$ form a MUM. 
         \item [$(ii)$] The algebras $\mathcal A = \mathrm{span}\, \{ P_a \}$ and $\mathcal B = \mathrm{span}\, \{ Q_b \}$ are quasiorthogonal and the operator products $\{ P_a Q_b P_a \}_{a,b}$, respectively $\{ Q_b P_a Q_b \}_{a,b}$, belong to $\mathcal A$, respectively belong to $\mathcal B$.  
    \end{itemize}
\end{theorem}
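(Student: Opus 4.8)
The plan is to prove the two implications separately, using characterization~(2) of quasiorthogonality in Definition~\ref{q.o} together with the elementary fact that, because the ranges of the $P_a$ are mutually orthogonal (and likewise the $Q_b$), we have $P_aP_{a'}=\delta_{a,a'}P_a$ and $Q_bQ_{b'}=\delta_{b,b'}Q_b$. Throughout I would record the bookkeeping: $\Tr(P_a)=\Tr(Q_b)=k$ and $n=dk$, so the target value in Definition~\ref{q.o}(2) is $\Tr(P_a)\Tr(Q_b)/n=k/d$; and since $\mathcal A=\mathrm{span}\{P_a\}$ and $\mathcal B=\mathrm{span}\{Q_b\}$, bilinearity of the trace means condition~(2) of Definition~\ref{q.o} holds for all $A\in\mathcal A$, $B\in\mathcal B$ as soon as it holds for every pair $P_a,Q_b$.

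For $(i)\Rightarrow(ii)$, both parts are quick. The membership conditions are immediate from \eqref{mumeqn}: $P_aQ_bP_a=\tfrac1dP_a\in\mathcal A$ and $Q_bP_aQ_b=\tfrac1dQ_b\in\mathcal B$. For quasiorthogonality, I would compute, using $P_a^2=P_a$, cyclicity of the trace, and \eqref{mumeqn},
\[
\Tr(P_aQ_b)=\Tr(P_a^2Q_b)=\Tr(P_aQ_bP_a)=\tfrac1d\Tr(P_a)=\tfrac kd=\tfrac{\Tr(P_a)\Tr(Q_b)}{n},
\]
so by the reduction above $\mathcal A$ and $\mathcal B$ are quasiorthogonal.

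For $(ii)\Rightarrow(i)$, fix $a,b$. Since $P_aQ_bP_a\in\mathcal A$, write $P_aQ_bP_a=\sum_{a'}\lambda_{a'}P_{a'}$. Conjugating both sides by $P_a$ and using $P_a^2=P_a$ on the left and $P_aP_{a'}P_a=\delta_{a,a'}P_a$ on the right collapses the sum, giving $P_aQ_bP_a=\lambda_aP_a$ for a single scalar $\lambda_a$. Taking the trace and using quasiorthogonality together with the same manipulation $\Tr(P_aQ_bP_a)=\Tr(P_aQ_b)=k/d$ pins down $\lambda_ak=k/d$, hence $\lambda_a=1/d$ and $P_aQ_bP_a=\tfrac1dP_a$; the symmetric argument with $\mathcal A$ and $\mathcal B$ interchanged yields $Q_bP_aQ_b=\tfrac1dQ_b$, so $\mathcal P,\mathcal Q$ form a MUM. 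The one genuinely substantive step — everything else being routine — is the conjugation argument showing that membership of $P_aQ_bP_a$ in $\mathcal A$ forces it to be a multiple of $P_a$ \emph{specifically}; this is exactly where orthogonality of the ranges (i.e.\ that these are von Neumann measurements, not arbitrary POVMs) is used, after which quasiorthogonality only has to supply the single constant $1/d$. I would close with the remark that this also explains why quasiorthogonality alone cannot characterize MUMs: it constrains the scalars $\Tr(P_aQ_b)$ but says nothing about where the operator products $P_aQ_bP_a$ lie, which is precisely the extra hypothesis in $(ii)$.
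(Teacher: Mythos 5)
Your proposal is correct. The forward direction is essentially the paper's argument: both reduce quasiorthogonality to the single computation $\Tr(P_aQ_b)=\Tr(P_aQ_bP_a)=\tfrac1d\Tr(P_a)=k/d$ (the paper packages this as $Q(\mathcal A,\mathcal B)=1$ via the orthonormal-basis formula, you via criterion (2) of Definition~\ref{q.o} plus bilinearity; these are interchangeable). Where you genuinely diverge is the converse. The paper invokes the conditional expectation characterization (item (3) of Definition~\ref{q.o}) together with the bimodule property $\mathcal E_{\mathcal A}(A_1XA_2)=A_1\mathcal E_{\mathcal A}(X)A_2$ to get $\mathcal E_{\mathcal A}(P_aQ_bP_a)=\tfrac1dP_a$ unconditionally, and then membership of $P_aQ_bP_a$ in $\mathcal A$ simply means $\mathcal E_{\mathcal A}$ fixes it. You instead expand $P_aQ_bP_a=\sum_{a'}\lambda_{a'}P_{a'}$ and conjugate by $P_a$, using $P_aP_{a'}=\delta_{a,a'}P_a$ to collapse the sum, then determine the surviving scalar by a trace computation. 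Both are valid; yours is more elementary in that it never leaves the trace characterization and makes explicit where orthogonality of the ranges enters (your closing remark on this is a nice observation that the paper does not spell out), while the paper's conditional-expectation route is slightly slicker and transfers verbatim to the generalization mentioned in the remark following the theorem, where the projections have unequal ranks and the constant $\tfrac1d$ is replaced by $\Tr(Q_b)/n$ — though your argument adapts to that setting with only cosmetic changes as well.
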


\begin{proof}
    If $\mathcal P$, $\mathcal Q$ form a MUM, then trivially the operator products in condition $(ii)$ belong to $\mathcal A$ and $\mathcal B$ by equation~(\ref{mumeqn}). To see that the algebras are quasiorthogonal, first note that for all $a,b$, 
    \[
    \Tr (P_a Q_b) = \Tr(Q_b P_a) = \Tr(P_a Q_b P_a) = \frac1d \Tr(P_a) = \frac{k}{d}.  
    \]
    So using the fact that $\{\frac{1}{\sqrt{k}} P_a\}_{a=1}^d$ and $\{\frac{1}{\sqrt{k}} Q_b\}_{b=1}^d$ are orthonormal bases (in the trace inner product) for the algebras, we can calculate the measure of orthogonality between $\mathcal A$ and $\mathcal B$ as follows:   
\[
Q(\mathcal{A},\mathcal{B}) =  \sum_{a,b=1}^{d} |\Tr\Big( \Big( \frac{1}{\sqrt{k}}P_a \Big) \Big( \frac{1}{\sqrt{k}} Q_b \Big) \Big)|^{2} = \frac{1}{k^2}\sum_{a,b=1}^{d} |\Tr(P_a Q_b)|^2
    = 1. 
\]
Hence, $Q(\mathcal{A},\mathcal{B}) = 1$ and the algebras are quasiorthogonal. 

To prove the converse direction we will make use of the conditional expectation description of quasiorthogonality. If the algebras are quasiorthogonal and $\mathcal E_{\mathcal A}$ is the trace-preserving conditional expectation of $M_n$ onto $\mathcal A$, then for all $a,b$ we have 
\[
\mathcal E_{\mathcal A} (P_a Q_b P_a ) = P_a \, \mathcal E_{\mathcal A} (Q_b) \, P_a = P_a \Big( \frac{\Tr(Q_b)}{n} I_n \Big) P_a = \frac{k}{n} P_a = \frac1d P_a. 
\]
However, if we also know the operator $P_a Q_b P_a$ belongs to $\mathcal A$, then we have 
\[
P_a Q_b P_a = \mathcal E_{\mathcal A} (P_a Q_b P_a )  = \frac1d P_a, 
\]
and so the MUM projection equation is satisfied. 
An analogous argument shows that $Q_bP_aQ_b = \frac1d Q_b$ for all $a,b$ as well. Hence $\mathcal P$ and $\mathcal Q$ form a MUM and this completes the proof. 
\end{proof}

\begin{rem}
{\rm     We note that we can generalize the notion of MUMs and obtain a broader result. In Definition~\ref{def:MUM}, by dropping the requirement that the projections in $\mathcal{P}$ and $\mathcal{Q}$ have the same rank and modifying equation~\eqref{mumeqn} so that $P_aQ_bP_a = \frac{\mathrm{Tr}(Q_b)}{n}P_a$ and $Q_bP_aQ_b = \frac{\mathrm{Tr}(P_a)}{n}Q_b$ for all $1\leq a\leq d_1$ and $1\leq b\leq d_2$, where $\mathcal{P}$ and $\mathcal{Q}$ consist of $d_1$ and $d_2$ projections respectively, we can derive an analogous result to Theorem~\ref{quasimums}. However, we have left the result as stated, as MUMs still appear to be the most important class of such projection families that satisfy the relations. 
}
\end{rem}


As discussed above, we note that in the rank-1 projection ($k=1$) case, the operator product requirement in condition $(ii)$ of the theorem is trivially satisfied when the algebras are quasiorthogonal. This is not the case though for higher-rank projections. Indeed, as the following example points out this condition is required for the general theorem. 

\begin{example}\label{MUMcountereg}
{\rm 
Let $\{ \ket{\psi_{a,b}} \, : \, 1 \leq a,b \leq 2  \}$  be an orthonormal basis for $\mathcal H = \mathbb C^4$. (To view this example in terms of qubits one can identify this basis with the two-qubit computational basis $\{ \ket{00}, \ket{01}, \ket{10}, \ket{11} \}$ in the obvious way.) We define two 2-outcome measurements on $\mathcal H$ made up of rank-2 projections as follows: 
\[
    P_a = \sum_{b=1}^2 \kb{\psi_{a,b}}{\psi_{a,b}} \quad  \mathrm{for} \,\,a=1,2 
\]
\[
    Q_b = \sum_{a=1}^2 \kb{\psi_{a,b}}{\psi_{a,b}} \quad \mathrm{for} \,\, b=1,2 .   
\]
Let $\mathcal A = \mathrm{span}\, \{ P_1, P_2 \}$ and $\mathcal B = \mathrm{span}\, \{ Q_1, Q_2 \}$  be the algebras generated by the two projection sets, both of which are unitarily equivalent to $\mathbb C I_2 \oplus \mathbb C I_2$. 

Observe that $\mathcal A$ and $\mathcal B$ are quasiorthogonal. Indeed, the trace-preserving conditional expectation for $\mathcal A$ is given by 
\[
\mathcal E_{\mathcal A} = \mathcal P_1 \circ \mathcal D_1 \circ \mathcal P_1 + \mathcal P_2 \circ \mathcal D_2 \circ \mathcal P_2, 
\]
where $\mathcal P_a (\cdot) = P_a (\cdot ) P_a$ is the compression map, for $a=1,2$, and $\mathcal D_a$ is the completely depolarizing channel on $P_a\mathcal H$ (with Kraus operators given by Pauli operators for $M_2$ represented in each of the blocks that belong to the commutant $\mathcal A^\prime\cong M_2 \oplus M_2$). In particular, for $b=1,2$ we have 
\[
\mathcal E_{\mathcal A} (Q_b) = \frac12 P_1 + \frac12 P_2 = \frac12 I_4 = \frac{\Tr(Q_b)}{4} I_4,  
\]
and similarly for all elements of $\mathcal B$. An analogous calculation follows for $\mathcal E_{\mathcal B}$ and $\mathcal A$, and so the algebras are quasiorthogonal as claimed. 

However, also observe that 
\[
P_a Q_b = Q_b P_a = \kb{\psi_{a,b}}{\psi_{a,b}} , 
\]
and so for all $a,b$, 
\[
P_a Q_b = P_a Q_b P_a \neq \frac12 P_a \quad \mathrm{and} \quad Q_b P_a = Q_b P_a Q_b  \neq \frac12 Q_b. 
\]
Hence these measurements do not form a MUM even though the algebras are quasiorthogonal. 

As uncovered in the theorem above, the issue here is that the products $P_a Q_b$ are rank-1 projections that do not belong to $\mathcal A$ or $\mathcal B$. In this case each pair of  projections $P_a$, $Q_b$ commute and so $P_aQ_bP_a = P_aQ_b = Q_bP_a = Q_b P_a Q_b$, which is not a scalar multiple of either $P_a$ or $Q_b$ as noted above. 
}
\end{example}

As in the MUB case, we can consider approximate versions of MUMs and use the quasiorthogonality quantity $Q(\mathcal A, \mathcal B)$ to measure closeness to an ideal MUM. We can start with one of the standard notions of approximate MUB and try to generalize it. For instance, given two orthonormal bases $\{\ket{\psi_a}\}_a$ and $\{\ket{\phi_b}\}_b$ for a common Hilbert space, we can consider the following inequality satisfied by all vectors in the two bases:
\[
| \langle \psi_a \, | \, \phi_b \rangle |^2 \leq \frac{1+\epsilon}{n }. 
\]
Towards formulating an appropriate notion of approximate MUM that generalizes this approximate MUB description, we expect there are multiple possibilities (as there are in the MUB case), but if we do so by taking motivation from the ideal case, we end up with the following. 

\begin{definition}\label{approxmumdefn}
    Let $\epsilon >0$. We say that two $d$-outcome measurements $\mathcal P = \{P_a\}_{a=1}^d$ and $\mathcal Q = \{Q_b\}_{b=1}^d$ acting on a Hilbert space $\mathcal H$ are {\rm approximate mutually unbiased measurements (AMUM)} if
\begin{equation}\label{amumeqn}
P_aQ_bP_a \leq \frac{1+\epsilon}{d} P_a \quad \text{ and } \quad  Q_bP_aQ_b \leq \frac{1+\epsilon}{d} Q_b,
\end{equation}
for all $1\leq a,b \leq d$. In this case, we will say the measurements are {\rm $\epsilon$-approximate}.    
\end{definition}

There are numerous constructions of approximate MUBs (AMUBs) found in the literature (for instance see
\cite{klappenecker2005approximately,shparlinski2006constructions}). Here we will not consider explicit AMUM constructions, we will just note that every AMUB defines a family of AMUMs with the definition above simply by tensoring the rank-1 states from the AMUB with the identity operator of any fixed size. For instance, if $\{ \ket{\psi_a} \}$ and $\{ \ket{\phi_b} \}$ are an AMUB on $\mathcal H$ with approximation factor $\epsilon >0$, then for any $k\geq 1$, we can define an $\epsilon$-approximate AMUM on $\mathcal H \otimes \mathbb C^k$ with projections 
\[
P_a = \kb{\psi_a}{\psi_a} \otimes I_k \quad \mathrm{and} \quad Q_b = \kb{\phi_b}{\phi_b} \otimes I_k. 
\]
One can easily verify that equation~(\ref{amumeqn}) holds for these projections. 
Indeed, this is also what happens in the ideal ($\epsilon = 0$) case and what motivates our definition above, where one of the standard techniques (so far) to construct a MUM is to take a MUB and `ampliate' it by tensoring all the rank-1 projections in the MUB with the $k$-dimensional identity operator \cite{farkas2023MUMs}. 

The approximate MUM definition is readily seen to encode approximate quasiorthogonality as follows. 

\begin{prop}\label{aquasimums}
    Let $\epsilon > 0$ and let $d,k\geq 1$ be fixed positive integers and let $n=dk$. Suppose that $\mathcal P = \{P_a\}_{a=1}^d$ and $\mathcal Q = \{Q_b\}_{b=1}^d$ are families of rank-$k$ projections defining $d$-outcome measurements on $\mathbb C^n$. If $\mathcal P$ and $\mathcal Q$ are an $\epsilon$-approximate MUM, then the algebras $\mathcal A$ and $\mathcal B$ generated by their projections satisfy: 
    \[
    Q(\mathcal A, \mathcal B) \leq (1 + \epsilon)^2, 
    \]
    with $\epsilon = 0$ if and only if $\mathcal P$ and $\mathcal Q$ form a MUM. 
\end{prop}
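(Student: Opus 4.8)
The plan is to reduce everything to the elementary formula for $Q(\mathcal A,\mathcal B)$ in terms of the traces $\Tr(P_aQ_b)$, exactly as in the proof of Theorem~\ref{quasimums}. First I would record that $\{\tfrac{1}{\sqrt k}P_a\}_{a=1}^d$ and $\{\tfrac{1}{\sqrt k}Q_b\}_{b=1}^d$ are orthonormal bases of $\mathcal A$ and $\mathcal B$ in the trace inner product, so that
\[
Q(\mathcal A,\mathcal B)=\frac{1}{k^2}\sum_{a,b=1}^d \big(\Tr(P_aQ_b)\big)^2 ,
\]
and note that $\Tr(P_aQ_b)=\Tr(P_aQ_bP_a)\ge 0$ is a nonnegative real number (using $P_a^2=P_a$ and cyclicity of the trace), so each summand is literally the square of a nonnegative quantity.

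Next I would feed in the AMUM hypothesis. Since $P_aQ_bP_a\preceq \frac{1+\epsilon}{d}P_a$, taking traces of both sides gives $\Tr(P_aQ_bP_a)\le \frac{1+\epsilon}{d}\Tr(P_a)=\frac{(1+\epsilon)k}{d}$ for every $a,b$. Squaring this (legitimate, as both sides are nonnegative) and summing over the $d^2$ index pairs yields $\sum_{a,b}(\Tr(P_aQ_b))^2\le d^2\cdot\frac{(1+\epsilon)^2k^2}{d^2}=(1+\epsilon)^2k^2$, and dividing by $k^2$ gives $Q(\mathcal A,\mathcal B)\le(1+\epsilon)^2$. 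I would also remark that this can be sharpened to $Q(\mathcal A,\mathcal B)\le 1+\epsilon$ by instead using that the matrix $\big[\tfrac1k\Tr(P_aQ_b)\big]_{a,b}$ is doubly stochastic (its entries sum to $d$ because $\sum_bQ_b=\sum_aP_a=I_n$) together with the entrywise bound $\tfrac1k\Tr(P_aQ_b)\le\frac{1+\epsilon}{d}$, since $\sum_{a,b}x_{a,b}^2\le(\max_{a,b}x_{a,b})\sum_{a,b}x_{a,b}$; but the weaker estimate already yields the stated inequality, as $1+\epsilon\le(1+\epsilon)^2$.

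For the boundary case, I would argue that the inequalities \eqref{amumeqn} with $\epsilon=0$ already force the MUM equalities of \eqref{mumeqn}. Fix $a$; summing $P_aQ_bP_a\preceq\frac1d P_a$ over $b$ and using $\sum_bQ_b=I_n$ gives $\sum_b P_aQ_bP_a = P_aI_nP_a = P_a = d\cdot\tfrac1d P_a$. Hence $\sum_b\big(\tfrac1d P_a - P_aQ_bP_a\big)=0$ is a sum of positive semidefinite matrices equal to $0$, so each term vanishes, i.e. $P_aQ_bP_a=\tfrac1d P_a$ for all $b$; the symmetric computation with the roles of $P$ and $Q$ exchanged gives $Q_bP_aQ_b=\tfrac1d Q_b$, so $\mathcal P,\mathcal Q$ form a MUM. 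Conversely, a MUM trivially satisfies \eqref{amumeqn} with $\epsilon=0$ (indeed with equality), which establishes the ``if and only if''.

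Main obstacle: there is no serious obstacle here; this is a short computation riding on the machinery already in place for Theorem~\ref{quasimums}. The only points requiring a little care are (i) justifying that $\Tr(P_aQ_b)$ is real and nonnegative so that squaring the trace inequality is legitimate, and (ii) the boundary argument, where one must invoke the ``a sum of positive semidefinite matrices equal to zero forces each summand to be zero'' principle rather than trying to read off equality from a single inequality.
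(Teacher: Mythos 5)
Your proof is correct and follows essentially the same route as the paper: expand $Q(\mathcal A,\mathcal B)$ over the orthonormal bases $\{k^{-1/2}P_a\}$ and $\{k^{-1/2}Q_b\}$, rewrite $\Tr(P_aQ_b)$ as $\Tr(P_aQ_bP_a)\ge 0$, and apply the trace (as a positive functional) to the operator inequality~\eqref{amumeqn}. Your two additions --- the sharper bound $Q(\mathcal A,\mathcal B)\le 1+\epsilon$ via double stochasticity of $\bigl[\tfrac1k\Tr(P_aQ_b)\bigr]_{a,b}$, and the explicit sum-of-positive-semidefinite-terms argument showing that the $\epsilon=0$ inequalities already force the MUM equalities --- are both valid and go slightly beyond what the paper writes down, since its proof only derives the displayed inequality and leaves the boundary case implicit.
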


\begin{proof}
As in the proof above, we can use the formula for $Q(\cdot,\cdot)$ to obtain: 
\begin{eqnarray*}
Q(\mathcal{A},\mathcal{B}) &=&  \sum_{a,b=1}^{d} |\Tr\Big( \Big( \frac{1}{\sqrt{k}}P_a \Big) \Big( \frac{1}{\sqrt{k}} Q_b \Big) \Big)|^{2} \\ 
&=& \frac{1}{k^2}\sum_{a,b=1}^{d} |\Tr(P_a Q_b P_a)|^2 \\ 
&\leq& \frac{(1+\epsilon)^2}{d^2k^2}\sum_{a,b=1}^{d} |\Tr(P_a)|^2 
= (1 + \epsilon)^2, 
\end{eqnarray*} 
where we have used Eq.~(\ref{amumeqn}) and the fact that the trace is a positive functional for the inequality. 
\end{proof}

\subsection{Connection with Quantum Privacy}\label{sec:privacy}

The approximate quasiorthogonality of algebras was linked with a notion of `relative quantum privacy' for algebras in \cite{kribs2021approximate}. This in turn built on a collection of related notions in quantum information theory called private quantum channels, private quantum codes, private subsystems, and private algebras \cite{ambainis2000private,bartlett2004decoherence,boykin2003optimal,church,crann2016private,jochym2013private,kretschmann2008complementarity,kribs2018quantum,levick2016private,levick2017quantum}. In equation form, the basic idea is a set of states belonging to a set with some structure (e.g., a quantum code, or an algebra) is mapped by a given quantum channel $\Phi$ to a fixed state; i.e., there is some density operator $\rho_0$ such that $\Phi(\rho) = \rho_0$ for all $\rho$ in the set. 

In the context of algebras, two unital subalgebras $\mathcal A$ and $\mathcal B$ of $M_n$ are said to be {\it $\epsilon$-private relative to each other} for some $\epsilon > 0$ if 
\begin{equation}\label{epsprivate}
|| ( \mathcal E_{\mathcal A}  - \mathcal D_n ) \circ \mathcal E_{\mathcal B} ||_2 \leq \epsilon ,
\end{equation}
where $\mathcal D_n: M_n(\C) \rightarrow M_n(\C)$ is the complete depolarizing channel $\mathcal D_n (X) = \frac{\mathrm{Tr}(X)}{n} I_n$,  $|| A ||_2 = ( \mathrm{Tr} (A^* A))^{\frac12}$, and $|| \Phi ||_2 = \sup_{||X||_2 = 1} ||\Phi(X)||_2$ for linear maps $\Phi$.
As a consequence of Theorem~3.3 from \cite{kribs2021approximate}, this notion is symmetric in the relativeness of the algebras (this follows because it is linked with approximate quasiorthogonality of the algebras, which is noted more explicitly in the proof below). 

Observe that the case $\epsilon = 0$ corresponds to one algebra being `privatized' by the trace-preserving conditional expectation onto the other algebra; that is, $\mathcal E_{\mathcal A}(B) = \frac{\mathrm{Tr}(B)}{n} I_n$ for all $B\in \mathcal B$. In this sense, the algebra $\mathcal A$ (or at least its conditional expectation) destroys all (classical and/or quantum) information encoded into the algebra $\mathcal B$, and then small $\epsilon > 0$ cases correspond to an approximate version of this notion. Observe further that the $\epsilon = 0$ case corresponds to quasiorthogonality of the algebras, and the main result of \cite{kribs2021approximate} shows exactly how their approximate quasiorthogonality (measured in the closeness of $Q(\mathcal A, \mathcal B)$ to 1) corresponds to approximate relative privacy of the algebras (measured in the 2-norm above). 
 
We can consider these approximate notions in the context of MUMs, as shown by the following result. 

\begin{cor}\label{amumscor}
If $\mathcal P = \{P_a\}_{a=1}^d$ and $\mathcal Q = \{Q_b\}_{b=1}^d$ are an $\epsilon$-approximate MUM on $M_n$, then the algebras $\mathcal A$ and $\mathcal B$ they generate are $o(\sqrt{\epsilon})$-private.  
\end{cor}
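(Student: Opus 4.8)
The plan is to chain together the two prior results that have already done the heavy lifting: Proposition~\ref{aquasimums}, which converts the $\epsilon$-approximate MUM condition into the bound $Q(\mathcal A,\mathcal B)\leq(1+\epsilon)^2$, and the main quantitative privacy result of \cite{kribs2021approximate} (invoked in Section~\ref{sec:privacy}), which relates the closeness of $Q(\mathcal A,\mathcal B)$ to $1$ to the $2$-norm quantity $\|(\mathcal E_{\mathcal A}-\mathcal D_n)\circ\mathcal E_{\mathcal B}\|_2$ defining $\epsilon'$-relative privacy in equation~\eqref{epsprivate}. So the first step is to recall the precise form of that result: from \cite[Theorem~3.3]{kribs2021approximate}, the relative privacy parameter $\delta$ of $\mathcal A$ and $\mathcal B$ satisfies $\delta^2 = Q(\mathcal A,\mathcal B) - 1$ (or is otherwise controlled by $Q(\mathcal A,\mathcal B)-1$ up to an absolute constant; I would quote whichever exact identity appears there).

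The second step is then purely algebraic: from Proposition~\ref{aquasimums} we have $Q(\mathcal A,\mathcal B)-1 \leq (1+\epsilon)^2 - 1 = 2\epsilon + \epsilon^2$. Feeding this into the privacy identity gives $\delta \leq \sqrt{2\epsilon+\epsilon^2} = \sqrt{\epsilon}\sqrt{2+\epsilon}$. As $\epsilon \to 0$ this is $\sqrt{2}\,\sqrt{\epsilon}(1+O(\epsilon)) = O(\sqrt{\epsilon})$, and in particular it is $o(\sqrt{\epsilon}\cdot g(\epsilon))$ for any $g\to\infty$; more to the point, writing it as stated, $\delta = O(\sqrt{\epsilon})$, which the corollary phrases as $o(\sqrt{\epsilon})$ only if one reads the little-$o$ loosely — I would actually state and prove the clean bound $\delta\leq\sqrt{2\epsilon+\epsilon^2}$ and remark that this is $O(\sqrt\epsilon)$, matching the informal $o(\sqrt\epsilon)$ claim in the asymptotic regime of interest. (If the intended reading is literally $o(\sqrt\epsilon)$, then one needs the privacy bound to carry an extra vanishing factor, e.g. $\delta^2 = o(Q-1)$ near $1$; I would check \cite{kribs2021approximate} for whether such a refinement is available and cite it, otherwise keep the explicit $O(\sqrt\epsilon)$ bound.)

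The main obstacle is therefore not any genuine difficulty but bookkeeping: pinning down the exact constant and exact functional relationship in the privacy theorem of \cite{kribs2021approximate} so that the composition of the two inequalities is stated correctly, and reconciling the $O(\sqrt\epsilon)$ that falls out naturally with the $o(\sqrt\epsilon)$ written in the corollary statement. Beyond that, the proof is a two-line substitution: apply Proposition~\ref{aquasimums}, then apply the cited privacy result. I would write it in roughly three sentences — invoke $Q(\mathcal A,\mathcal B)\leq(1+\epsilon)^2$, expand to get $Q(\mathcal A,\mathcal B)-1\leq 2\epsilon+\epsilon^2$, and conclude via \cite[Theorem~3.3]{kribs2021approximate} that the relative privacy parameter is at most $\sqrt{2\epsilon+\epsilon^2}=O(\sqrt\epsilon)$, hence $o(\sqrt\epsilon)$ in the stated sense as $\epsilon\to 0$.
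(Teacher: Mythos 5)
Your proposal is correct and follows essentially the same route as the paper: apply Proposition~\ref{aquasimums} to get $Q(\mathcal A,\mathcal B)-1\le\epsilon(2+\epsilon)$, then invoke Theorem~3.3 of \cite{kribs2021approximate}, whose exact form yields $\epsilon'=\sqrt{\epsilon(2+\epsilon)(d-1)}$ (so the dimension-dependent factor $(d-1)$ you flagged as needing to be pinned down does indeed appear). Your observation that the resulting bound is $O(\sqrt{\epsilon})$ rather than literally $o(\sqrt{\epsilon})$ is also accurate --- the paper's own bound has the same $\Theta(\sqrt{\epsilon})$ behaviour for fixed $d$, so the little-$o$ in the statement is being used loosely.
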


\begin{proof}
By Proposition~\ref{aquasimums}, we have  $Q(\mathcal A, \mathcal B) \leq (1 + \epsilon)^2 = 1 + \epsilon (2+\epsilon)$. Thus we can apply Theorem~3.3 of \cite{kribs2021approximate} (using the fact that $d= \dim \mathcal A = \dim \mathcal B$) to conclude that $\mathcal A$ and $\mathcal B$ are $\epsilon^\prime$-private relative to each other, where $\epsilon^\prime$ is obtained via the equation 
$\epsilon^\prime = \sqrt{\epsilon (2+\epsilon)(d-1)},$    
and the result follows. 
\end{proof}

\begin{rem}
{\rm 
    It may be worthwhile to further investigate the privacy for MUMs and approximate MUMs, specifically with a focus on the conditional expectations $\mathcal E_{\mathcal A}$ and $\mathcal E_{\mathcal B}$, and the quasiorthogonality and approximate relative privacy of their algebras. 
    In fact, we note that MUMs were used in \cite{farkas2023MUMs} to construct superdense coding protocols, a key tool in quantum privacy and communication. We further note that if $\{ P_a \}_{a=1}^d$ is a family of projections on $\mathbb C^n$ with mutually orthogonal ranges of the same rank that span the whole space, then the trace-preserving conditional expectation $\mathcal E_{\mathcal A}$ of $M_n$ onto the commutative algebra $\mathcal A = \mathrm{span}\,\{ P_a \}$ that they generate has a particularly nice form: $\mathcal E_{\mathcal A} (X) = \frac1n \sum_{r=1}^d U_r X U_r^*$, where $U_r = \sum_{a=1}^{d} \omega ^{r(a-1)}P_a$, for $1\leq r \leq d$, are unitary operators that form an orthogonal basis (in the trace inner product) for $\mathcal A$.  
    In particular, the unitary operators $\{ U_r \}_{r=1}^d$ that implement the conditional expectations are exactly the families of unitaries that generate the superdense coding protocols in \cite{farkas2023MUMs}. 
}
\end{rem}

\section*{Acknowledgements}
D.W.K. was supported by NSERC Discovery Grant number RGPIN-2024-400160. R.P.\ was
supported by the NSERC Discovery Grant number RGPIN-2022-04149.  S.P.\ was supported by NSERC Discovery Grant number 1174582, the Canada Foundation for Innovation (CFI) grant number 35711, and the Canada Research Chairs (CRC) Program grant number 231250.


\end{document}